\renewcommand{\thetheoremName}
\newtheorem*{thmain}{Main Theorem}
\newtheorem{proposition[[]]}[theoremName]{Proposition G}
\newtheorem{theorem}{Theorem}[section]
\newtheorem{lemma}[theorem]{Lemma}
\newtheorem{proposition}[theorem]{Proposition}
\newtheorem{corollary}[theorem]{Corollary}
\theoremstyle{definition}
\newtheorem{definition}[theorem]{Definition}
\newtheorem{example}[theorem]{Example}
\newtheorem{remark}{Remark}
\numberwithin{equation}{section}
\newcommand{\Hess}{\operatorname{Hess}}
\newcommand{\dist}{\operatorname{dist}}
\newcommand{\Vol}{\operatorname{Vol}}
\newcommand{\erre}{\mathbb{R}}
\begin{document}

\title[Jellett-Minkowski's formula revisited]{Jellett-Minkowski's formula revisited.\\ Isoperimetric inequalities for submanifolds in an ambient manifold with bounded curvature}

\author{Vicent Gimeno}      
\address{Department of Mathematics-INIT, Universitat Jaume I, Castell\'o de la Plana, Spain                        
}
\email{gimenov@uji.es}

\thanks{Work partially supported by DGI grant MTM2010-21206-C02-02.}


\begin{abstract}
In this paper we provide an extension to the Jellett-Minkowski's formula for immersed submanifolds into  ambient manifolds which possesses a pole and radial curvatures  bounded from above or below by the radial sectional curvatures of a rotationally symmetric model space. Using this Jellett-Minkowski's generalized formula we can focus on several isoperimetric problems.  More precisely, on lower  bounds  for isoperimetric quotients of any precompact domain with smooth boundary, or on the isoperimetric profile of the submanifold  and its modified volume. In the particular case of a model space with strictly decreasing radial curvatures, an Aleksandrov type theorem is provided.
\keywords{Jellett-Minkowski-formula \and Aleksandrov type theorem \and isoperimetric problem \and  Mean curvature}
 \subjclass{35P15}
\end{abstract}

\maketitle

\section{Introduction}\label{intro}
Given a precompact domain $\Omega\subset P$ with smooth boundary $\partial \Omega$ in a $m-$dimensional submanifold $P^m$ of the Euclidean space $\erre^n$, by the Jellett formula one obtains (see \cite{Jellet,Chavel2,Chavel-Jellet})
\begin{equation}\label{Jellet-ine}
m\text{V}(\Omega)+ \int_\Omega \langle \tau, H\rangle dV=\int_{\partial \Omega}\langle \tau, \nu\rangle dA\quad ,
 \end{equation} 
where $\tau$ is the vector position in $\erre^n$, $V(\Omega)$ is the volume of $\Omega$, $\nu$ is the unit normal vector pointed outward to $\partial \Omega$ and,  $dV$ and $dA$ are the induced Riemannian volume and area densities on $\Omega$ and $\partial \Omega$ respectively.

The so-called Minkowski formula (see for instance \cite[Formula A in Theorem 6.11]{Montiel-Ros}) follows from the above formula for the particular case of a closed  $m$-dimensional submanifold $S$ immersed in $\erre^n$ :

\begin{equation}\label{Minkowsi}
mV(S)+\int_{S}\langle \tau, H\rangle dV=0\quad.
\end{equation} 

Making use of this formula, Jellett \cite{Jellet} proved in the middle of the nineteenth century that a star-shaped constant mean curvature surface $S\subset \erre^3$ is a round sphere (see \cite{Barros} for an extension of results in this direction).  In 1956 A. D Aleksandrov \cite{Alexandrov} improved  that result asserting that any closed, embedded hypersurface
in $\erre^n$ with constant mean curvature is a round sphere.

In this paper we provide an extension of the Jellett-Minkowski's
formula (\ref{Jellet-ine}) into a more general setting. Our setting
will be a $m$-dimensional submanifold $P^m$ immersed in a
$n$-dimensional ambient manifold $(N,o)$ with a pole $o$ and radial
curvatures $K_N$ (see \S \ref{Prelim}) bounded from above or below by
the radial sectional curvatures of a rotationally symmetric model
space $M_w^n=\erre^+\times \mathbb{S}_1^{n-1}\cup \{o_w\}$, with center point $o_w$ and warped metric
tensor $g_{M_w^n}$ constructed using the positive and increasing
warping function $w: \erre^+\to \erre^+$ in such a way that
$g_{M_w^n}=dr^2+w(r)^2g_{\mathbb{S}_1^{n-1}}$ (see \S \ref{secModelSpaces} for
precise definition and for the conditions that $w$ should attain in order to $M_w^n$ have smooth
metric tensor $g_{M^n_w}$ around $r=0$). By the Jellet-Minkowski generalized
formula we can obtain -among other results- an Aleksandrov type theorem for rotationally symmetric model manifolds.

The first step to generalize the Jellett-Minkowski's formula is to define a \emph{generalized  $w$-vector position}  $\tau_w$  using the  gradient $\nabla^N r$ of the distance function $r$ to the pole $o$ in the ambient manifold $N$
\begin{equation}\label{vector-pos}
\tau_w:=\frac{w}{w'}\nabla^Nr\quad .
\end{equation}
Our second steep is to define a \emph{weighted densities} $d\mu_w$ and $d\sigma_w$ on the submanifold using the warping function $w$ and the induced Riemannian densities $dV$ and  $dA$
\begin{equation}\label{densities}
\begin{aligned}
d\mu_w&:=w'(r) dV\quad,\\
d\sigma_w&:=w'(r) dA\quad.
\end{aligned}
\end{equation} 

Using the weighted densities $d\mu_w$ and $d\sigma_w$ for any domain  $\Omega$ with smooth boundary $\partial \Omega$ we can define the $w$-\emph{weighted volume} $\mu_w(\Omega)$ and the $w$-\emph{weighted area} ${\sigma_w}(\partial \Omega)$

 \begin{equation}\label{volumes}
\begin{aligned}
\mu_w(\Omega) &:=\int_{\Omega}d\mu_w\quad,\\
\sigma_w(\partial\Omega)&:=\int_{\partial\Omega}d\sigma_w\quad.
\end{aligned}
\end{equation}

With these previous definitions we can state following Jellett-Minkowski's generalized formula

\begin{thmain}[Jellett-Minkowski's generalized formula]\label{Jellet-teo}Let $\varphi: P^m\to N^n$ be an immersion into a $n-$dimensional ambient manifold $N$ which possesses a pole and its radial sectional curvatures $K_N$ at any point $p\in N$ are bounded by above (or below ) by the radial curvatures $K_w$ of a model space $M_w^n$ 
 $$
 K_N\left(p\right) \leq K_{M_w^n}\left(r\left(p\right)\right)=-\frac{w''}{w}\left(r\left(p\right)\right)\quad \left(\text{respectively } K_N\left(p\right)\geq -\frac{w''}{w}\left(r\left(p\right)\right)\right)\quad.
  $$ 
  Suppose moreover, that $w'>0$ . Then for any  precompact domain $\Omega$ with smooth boundary $\partial \Omega$
\begin{equation}\label{jellet-inequality}
\begin{aligned}
m \mu_w(\Omega)+\int_{\Omega}\langle \tau_w, H\rangle d\mu_w\leq (\geq) \int_{\partial \Omega}\langle \tau_w, \nu \rangle d\sigma_w \quad,
\end{aligned}
\end{equation}
where $\tau_w$ is the generalized $w$-vector position,  $\mu_w(\Omega)$ is the $w$-weighted volume of $\Omega$, $d\mu_w$ and $d\sigma_w$ are the $w$-weighted densities, and $\nu$ is the unit normal vector to $\partial \Omega$. 
\end{thmain}

\begin{remark}
Observe since $w$ is a positive increasing function expressions  (\ref{vector-pos}), (\ref{densities}) (\ref{volumes})  are well defined. In the particular case  when $w(r)=r$ one obtains 
\begin{equation}
\begin{aligned}
d\mu_w&=dV \quad \mu_w(\Omega)=V(\Omega)\\
d\sigma_w&=dA \quad \sigma_w(\partial \Omega)=A(\partial \Omega)\quad,
\end{aligned}
\end{equation}
and (\ref{jellet-inequality}) in the above theorem becomes the same expression than in (\ref{Jellet-ine}) but instead of an equality, one obtains an inequality.\end{remark}

\begin{remark}
Recall that given an isometric immersion $\varphi:(P,g_P)\to  (N,g_N)$ and the Levi-Civita connections $\nabla^N$ , $\nabla^P$ over $N$ and $P$ respectively, the second fundamental  form $B^P(X,Y)$  for any two vector fields $X$ and $Y$ of $P$ is 
$$
B^P(X,Y)=\nabla^N_XY-\nabla^P_XY\quad.
$$
In this paper we use the following convention  to define the mean curvature vector
$$
H:=\text{tr}_g(B^P)\quad.
$$
Observe that we do not divide by the dimension of the submanifold.
 \end{remark}

\begin{remark}
In the particular case when the ambient manifold is a model space inequality (\ref{jellet-inequality}) becomes an equality (see \cite{Brendle}) and we can rededuce the formula \cite[equation (6)]{Brendle}
\end{remark}
The structure of the paper is as follows.

In \S \ref{Applications} we explore the application of the above Jellett-Minkowski's generalized formula obtaining:
\begin{enumerate}
\item In the particular case in which the ambient manifold is a model space, we obtain an equality in the generalized Jellett-Minkowski formula. That allow us to provide a slight new Aleksandrov type theorem (theorem \ref{Alexandrov}) for rotationally symmetric model spaces in the line of \cite{Brendle} and to study its  application to the isoperimetric problem.
\item The  relation of the Jellett-Minkowski formula to the $k$-isoperimetric quotients.
\item Bounds  for the isoperimetric profile of isometric immersions into a geodesic ball in a Cartan-Hadamard in terms of  the total mean curvature of the submanifolds and its conformal type. 
\item  A gap result for the modified volume of complete non-compact manifolds into a Cartan-Hadamard ambient manifold. 
\item Finally, for  minimal immersions, in  corollary \ref{cor-minimal} - that is  an extension of  \cite[theorem 4]{Choe1992} - ,  we provide $w-$modified isoperimetric inequalities.
\end{enumerate}

In  \S \ref{Prelim}  we recall previous results due to the Hessian comparison of the extrinsic distance function in order to in \S \ref{Proof-Main},\S \ref{sec:5},\S \ref{sec:6}, \S \ref{sec:7}, \S \ref{Proof-Col1}, \S \ref{Proof-Col2}, \S \ref{Proof-Col3}, \S \ref{Proof-Col4}, \S \ref{Proof-Col5} prove the main theorem, the Aleksandrov type theorem (theorem \ref{Alexandrov}) and corollaries \ref{radius-isope-model}, \ref{cmc-model}, \ref{solv-isop}, \ref{tone-isoperimetric}, \ref{unuacol}, \ref{bola}, \ref{bola2}, \ref{cor-volume} and \ref{cor-minimal}.

\section{Applications of the Jellett-Minkowski's generalized formula}\label{Applications}

\subsection{Aleksandrov type theorem on $w-$model spaces} \label{secModelSpaces}
The Jellet-Minkowski generalized formula becomes an equality in the case of an immersion into a $w-$model space. Model spaces or also called $w-$Model spaces or rotationally symmetric model spaces, are generalized manifolds of revolution using warped products. Let us  recall here the following definition of a model space. 

\begin{definition}[See {\cite{GW, GriExp, GriBook}}]
A $w-$model space $M_{w}^{n}$ is a simply connected $n$-dimensional smooth manifold $M_w^n$ with a point $o_w\in M_w^n$ called the \emph{center point of the model space} such that $M_w^n-\{o_w\}$ is isometric to a smooth warped product with base $B^{1}
= (\,0,\, \Lambda)\,\,\subset\, \mathbb{R}$ (where $\, 0 < \Lambda
\leq  \infty$\,), fiber $F^{n-1} = S^{n-1}_{1}$ (i.e. the unit
$(n-1)-$sphere with standard metric), and positive warping function $w:\,
[\,0, \,\Lambda\, ) \to \mathbb{R}_{+}$. Namely: 
\begin{equation}\label{metric-model}
g_{M_w^n}=\pi^*\left(g_{(\,0,\, \Lambda)}\right)+(w\circ\pi)^2\sigma^*\left(g_{S^{n-1}_{1}}\right)\quad,
\end{equation}
being $\pi: M_w^n\to (\,0,\, \Lambda)$ and $\sigma: M_w^n\to S^{n-1}_{1}$ the projections onto the factors of the warped product.  
\end{definition}

 Despite of the freedom in the choose of the $w$ function in the above definition there exist   certain  restrictions around $r\to 0$. In order to attain $M_w^n$ a smooth  metric tensor around $o_w$ the positive warping function $w$ should hold the following equalities (see \cite{GW,Petersen}) :
\begin{equation}
\begin{aligned}
& w(0) = 0\quad,\\
&w'(0) = 1\quad,\\
& w^{(2k)}(0) = 0\quad,
\end{aligned}
\end{equation}
where $w^{(2k)}(r)$ are the even derivatives of $w$. 

The parameter  $\Lambda$ in the above definition is called the \emph{radius of the model space}. If
$\Lambda = \infty$, then $o_{w}$ is a pole of $M_{w}^{n}$.

The expression of the metric tensor (\ref{metric-model}) is often written as
$
g_{M_w^n}=dr^2+\left(w(r)\right)^2d\Theta^2
$,
where $d\Theta^2$ denotes the standard metric on $S^{n-1}_1$ ( $d\Theta^2=g_{S^{n-1}_1}$). The usual examples of model spaces are the real space forms 

\begin{remark}\label{propSpaceForm}
The simply connected space forms $\mathbb{K}^{m}(b)$ of constant
curvature $b$ can be constructed as  $w-$models with any given point
as center point using the warping functions
\begin{equation}
w(r) = w_{b}(r) =\begin{cases} \frac{1}{\sqrt{b}}\sin(\sqrt{b}\, r) &\text{if $b>0$}\\
\phantom{\frac{1}{\sqrt{b}}} r &\text{if $b=0$}\\
\frac{1}{\sqrt{-b}}\sinh(\sqrt{-b}\,r) &\text{if $b<0$} \quad .
\end{cases}
\end{equation}
Note that for $b > 0$ the function $w_{b}(r)$ admits a smooth
extension to  $r = \pi/\sqrt{b}$. For $\, b \leq 0\,$ any center
point is a pole.
\end{remark}

Applying now the generalized Jellett-Minkowski formula on embedded hypersurfaces $\Sigma$ in a model space bounding a domain $\Omega$ we can state 
\begin{corollary}\label{radius-isope-model}Let $M_w^n$ be a $w-$model space with positive warping function $w$ and positive derivative $w'$ on $(0,\Lambda)$ being $\Lambda$ the radius of the model space. Then for any closed, embedded, orientable hypersurface $\Sigma$ bounding a domain $\Omega$  
\begin{equation}
\frac{\sigma_w(\Sigma)}{\mu_w(\Omega)}\geq \frac{1}{n\sup_{\Sigma}(\vert \tau_w \vert)}\quad.
\end{equation}
\end{corollary}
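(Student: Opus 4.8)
The plan is to apply the Main Theorem (Jellett--Minkowski's generalized formula) to the identity immersion $\varphi=\operatorname{id}\colon M_w^n\to M_w^n$: here the role of the submanifold $P^m$ is played by $M_w^n$ itself with $m=n$, and the precompact domain with smooth boundary is $\Omega$, with $\partial\Omega=\Sigma$. Since the ambient manifold \emph{coincides} with the model space $M_w^n$, its radial sectional curvatures satisfy $K_N(p)=-\frac{w''}{w}(r(p))$ at every point $p$, so both hypotheses of the Main Theorem hold simultaneously, and as noted in the Remark above (see \cite{Brendle}) inequality~(\ref{jellet-inequality}) then holds with \emph{equality}. Moreover, because $P=N$ the second fundamental form $B^P$ vanishes identically, so $H=\text{tr}_g(B^P)=0$. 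Feeding this into the Main Theorem, I would obtain
\[
n\,\mu_w(\Omega)=\int_{\partial\Omega}\langle\tau_w,\nu\rangle\,d\sigma_w=\int_{\Sigma}\langle\tau_w,\nu\rangle\,d\sigma_w .
\]

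Next I would estimate the last integrand pointwise. Since $\nu$ is a unit vector, Cauchy--Schwarz gives $\langle\tau_w,\nu\rangle\le\abs{\tau_w}\le\sup_{\Sigma}\abs{\tau_w}$ at every point of $\Sigma$; the supremum is finite and attained because $\abs{\tau_w}$ is continuous on the compact set $\Sigma$ (recall $\tau_w=\frac{w}{w'}\nabla^N r$ extends continuously across the center point $o_w$, where $w/w'\to 0$). Integrating over $\Sigma$ against $d\sigma_w$ then gives
\[
n\,\mu_w(\Omega)=\int_{\Sigma}\langle\tau_w,\nu\rangle\,d\sigma_w\le\Big(\sup_{\Sigma}\abs{\tau_w}\Big)\,\sigma_w(\Sigma).
\]

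Finally, since $w>0$ on $(0,\Lambda)$ and $w'>0$ on $[0,\Lambda)$ we have $\mu_w(\Omega)>0$ and $\sup_{\Sigma}\abs{\tau_w}>0$ (as $\Sigma$, being a hypersurface, meets $\{r>0\}$), so dividing the previous inequality by $n\,\mu_w(\Omega)\,\sup_{\Sigma}\abs{\tau_w}$ yields
\[
\frac{\sigma_w(\Sigma)}{\mu_w(\Omega)}\;\ge\;\frac{n}{\sup_{\Sigma}\abs{\tau_w}}\;\ge\;\frac{1}{n\,\sup_{\Sigma}\abs{\tau_w}},
\]
the last inequality simply because $n\ge 1$; this is the assertion. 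I expect the only point needing care to be the first paragraph: checking that on a model space the generalized formula is an exact equality (so that the desired lower bound is not lost to the opposite inequality) and that $H$ really vanishes for the identity immersion; once these are in hand, the rest is the single Cauchy--Schwarz step above.
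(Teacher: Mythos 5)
Your proof is correct and takes essentially the same route as the paper: apply the Main Theorem to the identity immersion of the model space into itself (so the inequality becomes an equality and $H=0$, yielding $n\mu_w(\Omega)=\int_{\Sigma}\langle\tau_w,\nu\rangle\,d\sigma_w$), then bound $\langle\tau_w,\nu\rangle$ by $\sup_{\Sigma}|\tau_w|$. One remark worth recording: both your argument and the paper's in fact establish the stronger estimate $\sigma_w(\Sigma)/\mu_w(\Omega)\ge n/\sup_{\Sigma}|\tau_w|$, and the final weakening to $1/(n\sup_{\Sigma}|\tau_w|)$ is only there to match the corollary as literally stated, which very likely contains a misplaced $n$.
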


Recall that a constant mean curvature  hypersurface is a hypersurface $\Sigma$  with constant  pointed inward mean curvature $h$ :
$$
h=-\langle H,\nu\rangle= \text{ constant on }\Sigma\quad,
$$
being $\nu$ the unit normal outward vector to $\Sigma$.  

For constant mean curvatures hypersurfaces using the generalized Jellett-Minkowski formula we obtain

\begin{corollary}\label{cmc-model}Let  $M_w^n$ be a $w-$model space with positive warping function $w$ and positive derivative $w'$ on $(0,\Lambda)$ being $\Lambda$ the radius of the model space. Then for any closed, embedded, orientable hypersurface $\Sigma$ with constant mean curvature  bounding a domain $\Omega$  
\begin{equation}
h=\frac{(n-1)}{n}\frac{\sigma_w(\Sigma)}{\mu_w(\Omega)}\quad .
\end{equation}
\end{corollary}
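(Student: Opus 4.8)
The plan is to apply the Main Theorem twice and combine the two resulting equalities. The feature to exploit is the observation already recorded above: when the ambient manifold is a model space $M_w^n$, the radial sectional curvatures $K_N$ coincide identically with $K_w=-w''/w$, so the curvature hypothesis of the Main Theorem holds with \emph{both} signs simultaneously and (\ref{jellet-inequality}) becomes an equality. The standing assumption $w'>0$ on $(0,\Lambda)$ ensures that $\tau_w=\frac{w}{w'}\nabla^N r$, $d\mu_w$ and $d\sigma_w$ are all well defined.

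First I would take $P^m=M_w^n$ itself with $m=n$ and $\varphi=\mathrm{id}$; the identity is totally geodesic, so $H\equiv 0$, and the equality form of (\ref{jellet-inequality}) applied to the precompact domain $\Omega$, with smooth boundary $\partial\Omega=\Sigma$, gives
\begin{equation}\label{eq:cmcone}
n\,\mu_w(\Omega)=\int_{\Sigma}\langle\tau_w,\nu\rangle\,d\sigma_w .
\end{equation}
Equivalently, \eqref{eq:cmcone} is just the divergence theorem applied to the field $w'\tau_w=w\,\nabla^N r$ on $\Omega$, using $\Delta^{M_w^n}r=(n-1)w'/w$. A minor point is that $\tau_w$ extends smoothly across the center point $o_w$, because $w(0)=0$, so the right-hand side of \eqref{eq:cmcone} makes sense even when $o_w\in\Omega$.

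Next I would regard $\Sigma$ as the immersed submanifold: take $P^m=\Sigma$ with $m=n-1$, immersed in $N=M_w^n$, and apply the equality form of (\ref{jellet-inequality}) to $\Sigma$ itself, which is precompact with empty boundary. Since the $w$-weighted volume density of $\Sigma$ as an $(n-1)$-dimensional submanifold is exactly $w'(r)\,dA=d\sigma_w$, and the boundary integral vanishes, this reads
\begin{equation}\label{eq:cmctwo}
(n-1)\,\sigma_w(\Sigma)+\int_{\Sigma}\langle\tau_w,H_\Sigma\rangle\,d\sigma_w=0 ,
\end{equation}
where $H_\Sigma=\text{tr}_g(B^\Sigma)$ is the (unnormalized) mean curvature vector of $\Sigma$ in $M_w^n$. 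As $\Sigma$ is a hypersurface, $H_\Sigma$ is normal, so $H_\Sigma=\langle H_\Sigma,\nu\rangle\,\nu=-h\,\nu$ by the definition of the constant inward mean curvature $h$ (orientability provides the globally defined unit normal $\nu$). Substituting this into \eqref{eq:cmctwo}, pulling the constant $h$ out of the integral, and then inserting \eqref{eq:cmcone}, I obtain
\begin{equation}
(n-1)\,\sigma_w(\Sigma)=h\int_{\Sigma}\langle\tau_w,\nu\rangle\,d\sigma_w=h\,n\,\mu_w(\Omega),
\end{equation}
and dividing by $n\,\mu_w(\Omega)$ yields the claimed identity $h=\frac{n-1}{n}\,\frac{\sigma_w(\Sigma)}{\mu_w(\Omega)}$.

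I do not expect any real obstacle here beyond bookkeeping. The things to be careful about are: that the equality form of the Main Theorem genuinely applies to each of the two chosen submanifolds (the ambient model space via the identity immersion, and $\Sigma$ via its inclusion); that $\tau_w$, $d\mu_w$ and $d\sigma_w$ denote literally the same objects whether $\Sigma$ is viewed as $\partial\Omega$ or as a submanifold in its own right; and that the sign convention $h=-\langle H_\Sigma,\nu\rangle$ is propagated correctly through \eqref{eq:cmctwo}. Once these are in place the computation is purely algebraic.
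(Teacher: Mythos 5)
Your proof is correct and follows essentially the same route as the paper: apply the equality case of the Jellett--Minkowski formula once to $\Omega$ viewed as a totally geodesic $n$-dimensional submanifold (so $H\equiv 0$) to get $n\,\mu_w(\Omega)=\int_\Sigma\langle\tau_w,\nu\rangle\,d\sigma_w$, and once more to the closed $(n-1)$-dimensional submanifold $\Sigma$ to get $(n-1)\,\sigma_w(\Sigma)=-\int_\Sigma\langle\tau_w,H_\Sigma\rangle\,d\sigma_w$, then substitute $H_\Sigma=-h\,\nu$ with $h$ constant and combine. Your extra remarks — the smooth extension of $\tau_w$ across $o_w$ and the bookkeeping of sign conventions — are welcome clarifications but do not change the argument.
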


\begin{remark}
Observe that the above stated constant mean curvature hypersurfaces are mean convex hypersurfaces ($h>0$).
\end{remark}

Given a point $p\in M_w^n-\{o_w\}$ in a model space $M_w^n$ the distance $r$ to the center point $o_w$ is given by $r(p)=\pi(p)$. A $2$-plane $\Pi_{\text{radial}}$ in $T_pM_w^n$ is a \emph{radial plane} if it contains the radial direction ($\nabla r\in \Pi_{\text{radial}}$) and a $2$-plane $\Pi_{\text{fiber}}$ is a \emph{tangent to the fiber plane} if it is perpendicular to the radial direction ($\Pi_{\text{fiber}}\perp \nabla r$).
The sectional curvatures of the radial or tangent to the fibers planes depends only on the distance to the center of the model space (see proposition \ref{propModelRadialCurv}).

For model spaces with an accurate control of the sectional curvatures of the  radial and tangent to the fiber planes. We obtain the following Aleksandrov type theorem

\begin{theorem}[From Theorem 1.4 of \cite{Brendle}]\label{brendle-theorem}
Let  $(M_w^n,o_w)$ be a $w-$model space with center $o_w$, positive warping function $w$ and positive derivative $w'$ on $(0,\Lambda)$ being $\Lambda$ the radius of the model space. Suppose moreover that the sectional curvatures $K_{\Pi_{\text{fiber}}}$ of planes tangents to the fiber and the radial sectional currvatures $K_{\Pi_{\text{rad}}}$ satisfies the following relation
\begin{equation}\label{ine:secfiber-secrad}
K_{\Pi_{\text{fiber}}}\geq K_{\Pi_{\text{rad}}}.
\end{equation}
Then, if the radial sectional curvature is a monotone function non increasing on the distance to the center of the model ($\frac{d}{dr}K_{\Pi_{\text{rad}}}\leq 0$),  every closed, embedded, orientable  hypersurface $\Sigma$ with constant mean curvature  bounding a domain $\Omega$ is an umbilic submanifold. In the particular case when $K_{\Pi_{\text{fiber}}}> K_{\Pi_{\text{rad}}}$ and $\frac{d}{dr}K_{\Pi_{\text{rad}}}\leq 0$,  every closed, embedded, orientable  hypersurface $\Sigma$ with constant mean curvature  bounding a domain $\Omega$ is a sphere centered at $o_w$.
\end{theorem}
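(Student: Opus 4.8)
The plan is to use that, the ambient manifold being a $w$-model space, the generalized Jellett--Minkowski formula \eqref{jellet-inequality} holds with equality, and to pair the Minkowski relation this produces with a Heintze--Karcher inequality for the warped metric $g_{M_w^n}$, following Brendle. Comparing the two will force the Heintze--Karcher inequality to be saturated, which is equivalent to $\Sigma$ being totally umbilic; this already gives the first conclusion (only \eqref{ine:secfiber-secrad} and $\frac{d}{dr}K_{\Pi_{\text{rad}}}\le 0$ are needed here). When the pinching $K_{\Pi_{\text{fiber}}}\ge K_{\Pi_{\text{rad}}}$ is strict, a Codazzi computation will then upgrade umbilicity to $\Sigma$ being a geodesic sphere centered at $o_w$. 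The main difficulty is the Heintze--Karcher step; the rest is the Jellett--Minkowski equality and a curvature identity.

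\emph{Minkowski relation.} Applying the equality version of \eqref{jellet-inequality} to the closed hypersurface $\Sigma$ (so that the boundary term vanishes), and separately to the $n$-dimensional domain $\Omega$ (whose mean curvature vector is zero, which gives $\int_\Sigma\langle\tau_w,\nu\rangle\,d\sigma_w=n\,\mu_w(\Omega)$), one recovers exactly Corollary \ref{cmc-model},
\[
h=\frac{n-1}{n}\,\frac{\sigma_w(\Sigma)}{\mu_w(\Omega)}.
\]
In particular $h>0$, so $\Sigma$ is mean convex; this is the hypothesis the next step needs. (Here $w'>0$ is what makes $\tau_w,\,d\mu_w,\,d\sigma_w$ well defined.)

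\emph{Heintze--Karcher and umbilicity.} For the mean-convex $\Sigma=\partial\Omega$ one has
\[
\int_\Sigma\frac{w'(r)}{h/(n-1)}\,dA\;\ge\;n\,\mu_w(\Omega),
\]
with equality if and only if $\Sigma$ is totally umbilic. This is Brendle's argument: parametrize $\Omega$ by the inward normal exponential map $\Phi(x,t)=\exp^{N}_{x}(-t\,\nu(x))$, cut off at the cut locus, express the Jacobian of $\Phi$ through the shape operators of the parallel hypersurfaces $\Phi(\,\cdot\,,t)$, and control their evolution by a Riccati inequality; the two curvature hypotheses \eqref{ine:secfiber-secrad} and $\frac{d}{dr}K_{\Pi_{\text{rad}}}\le 0$ are exactly what makes the comparison with the model slices go through and yields the sharp constant, with equality forcing each parallel shape operator to be a multiple of the identity. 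Since $h$ is constant, the left-hand side equals $\sigma_w(\Sigma)/(h/(n-1))$, so the inequality reads $h\le\frac{n-1}{n}\frac{\sigma_w(\Sigma)}{\mu_w(\Omega)}$; but the Minkowski relation says this is an equality. Hence the Heintze--Karcher inequality is saturated and $\Sigma$ is totally umbilic.

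\emph{Rigidity.} Being umbilic, $\Sigma$ has scalar second fundamental form (with respect to $\nu$) equal to $\lambda\,g$ for some function $\lambda$ on $\Sigma$, so the Codazzi equation reads $\langle\bar R^{N}(X,Y)Z,\nu\rangle=(X\lambda)\langle Y,Z\rangle-(Y\lambda)\langle X,Z\rangle$ for $X,Y,Z$ tangent to $\Sigma$. Computing the left-hand side with the explicit curvature tensor of $g_{M_w^n}$ --- a combination of $K_{\Pi_{\text{rad}}}$ and $K_{\Pi_{\text{fiber}}}$ weighted by the components of $X,Y,Z,\nu$ along $\nabla^{N}r$ --- and comparing the two sides, the strict inequality $K_{\Pi_{\text{fiber}}}>K_{\Pi_{\text{rad}}}$ forces the tangential component of $\nabla^{N}r$ along $\Sigma$ to vanish identically; hence $r$ is constant on $\Sigma$, so $\Sigma$ is contained in --- and, being a closed hypersurface, equals --- a level set $\{r=\rho\}$, i.e. a geodesic sphere centered at $o_w$.
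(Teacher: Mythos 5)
Your proposal is correct in spirit, but it takes a genuinely different (and substantially more ambitious) route than the paper. The paper's proof of this statement is a two-line translation: it uses Proposition \ref{propModelRadialCurv} to rewrite the curvature hypotheses \eqref{ine:secfiber-secrad} and $\frac{d}{dr}K_{\Pi_{\text{rad}}}\le 0$ as the warping-function inequalities \eqref{w-inequalities}, observes that these are exactly Brendle's conditions H3' (and H4' for the strict case), and then cites Brendle's Theorem 1.4 as a black box. You instead give a sketch of the \emph{proof} of Brendle's theorem, essentially reproducing what the paper does later (\S 6, for Theorem \ref{Alexandrov}): Jellett--Minkowski equality $\Rightarrow$ Minkowski relation $\Rightarrow$ mean convexity, pair with Heintze--Karcher, and use saturation of Heintze--Karcher to get umbilicity. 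This is a fine strategy, and the one the paper uses for the neighbouring Theorem \ref{Alexandrov}; but as a proof of \emph{this} statement the paper contents itself with the translation-and-cite.

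Two details in your sketch deserve comment. First, the flow you describe should be the normal exponential flow for the \emph{conformal} metric $g_c=\frac{1}{w'}g_{M_w^n}$, not for $g_{M_w^n}$ itself; this is where the speed factor $w'$ (your equation $\frac{\partial}{\partial t}\Phi(x,t)|_{t=0}=-w'\nu$) comes from and is what produces the weight $w'$ in both the area and volume terms. Your notation $\exp^N_x(-t\nu)$ hides this. Second, your Codazzi rigidity step is a genuinely different argument from the paper's (the paper's \S 6 uses equality in the Hessian comparison, which is adapted to the \emph{other} strictness hypothesis $\frac{d}{dr}K_{\Pi_{\text{rad}}}<0$ in Theorem \ref{Alexandrov}(2)), and is well suited to $K_{\Pi_{\text{fiber}}}>K_{\Pi_{\text{rad}}}$. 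But it does not directly "force the tangential component of $\nabla^N r$ to vanish identically": writing $\nu=\cos\theta\,\partial_r+\sin\theta\,e$ and taking in Codazzi $X=-\sin\theta\,\partial_r+\cos\theta\,e\in T\Sigma$, $Y=Z=f$ a unit fiber vector orthogonal to $e$ (this needs $n\ge 3$), one gets
\[
0=\langle\bar R(X,f)f,\nu\rangle=\sin\theta\cos\theta\,\bigl(K_{\Pi_{\text{fiber}}}-K_{\Pi_{\text{rad}}}\bigr),
\]
so the strict inequality only yields $\sin\theta\cos\theta\equiv 0$. You then still need a connectedness argument: $\cos\theta=\langle\nu,\nabla^Nr\rangle$ is continuous with range in $\{-1,0,1\}$, hence constant on the connected $\Sigma$; the value $0$ is excluded because it would make $r|_\Sigma$ free of critical points on a compact hypersurface. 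Only then do you get $\cos\theta\equiv\pm1$, i.e. $\nabla^\Sigma r\equiv 0$ and $\Sigma$ a geodesic sphere. With these two repairs the argument is sound and self-contained, which is more than the paper does here (but less efficient than simply quoting Brendle).
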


\begin{proof}
By proposition \ref{propModelRadialCurv} 
\begin{equation}
\frac{d}{dr}K_{\Pi_{\text{rad}}}=\left(-\frac{w''}{w}\right)'=\frac{w'}{w}\left(-\frac{w'''}{w'}+\frac{w''}{w}\right).
\end{equation}
Assuming the positivity of $w'$, we obtain that inequality (\ref{ine:secfiber-secrad})  and the condition $\frac{d}{dr}K_{\Pi_{\text{rad}}}\leq 0$ are attained if and only if the warping function satisfies
\begin{equation}\label{w-inequalities}
\frac{1}{w^2}-\left(\frac{w'}{w}\right)^2\geq -\frac{w''}{w}\geq-\frac{w'''}{w'} \quad.
\end{equation}
And that implies the condition H3' of \cite[Theorem 1.4]{Brendle}. Finally, the condition 
\begin{equation}
K_{\Pi_{\text{fiber}}}> K_{\Pi_{\text{rad}}}
\end{equation}
 is equivalent to condition H4'.
\end{proof}

Following the proof of the Theorem 1.4 of \cite{Brendle} we can state the following sligth modifiqued theorem

\begin{theorem}[Aleksandrov type theorem]\label{Alexandrov}
Let  $(M_w^n,o_w)$ be a $w-$model space with center $o_w$, positive warping function $w$ and positive derivative $w'$ on $(0,\Lambda)$ being $\Lambda$ the radius of the model space. Suppose moreover that the sectional curvatures $K_{\Pi_{\text{fiber}}}$ of planes tangents to the fiber and the radial sectional currvatures $K_{\Pi_{\text{rad}}}$ satisfies the following relation
\begin{equation}
K_{\Pi_{\text{fiber}}}\geq K_{\Pi_{\text{rad}}}.
\end{equation}
Then: 
\begin{enumerate}
\item If the radial sectional curvature is a monotone function non increasing on the distance to the center of the model ($\frac{d}{dr}K_{\Pi_{\text{rad}}}\leq 0$),  every closed, embedded, orientable  hypersurface $\Sigma$ with constant mean curvature  bounding a domain $\Omega$ is an umbilic submanifold. In the particular case when $K_{\Pi_{\text{fiber}}}> K_{\Pi_{\text{rad}}}$ and $\frac{d}{dr}K_{\Pi_{\text{rad}}}\leq 0$,  every closed, embedded, orientable  hypersurface $\Sigma$ with constant mean curvature  bounding a domain $\Omega$ is a sphere centered at $o_w$.
\item  If the radial sectional curvature is a  strictly decreasing function on the distance to the center of the model ($\frac{d}{dr}K_{\Pi_{\text{rad}}}<0$),  every closed, embedded, orientable  hypersurface $\Sigma$ with constant mean curvature  bounding a domain $\Omega$ is a sphere centered at $o_w$.
\end{enumerate}
\end{theorem}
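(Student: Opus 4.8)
The first statement is exactly Theorem~\ref{brendle-theorem}, so nothing new is required there; the plan is to deduce the second statement from Theorem~\ref{brendle-theorem} together with the Codazzi equation and Schur's Lemma. First I would observe that $\frac{d}{dr}K_{\Pi_{\text{rad}}}<0$ implies $\frac{d}{dr}K_{\Pi_{\text{rad}}}\leq 0$, so all the hypotheses of Theorem~\ref{brendle-theorem} are in force and every closed, embedded, orientable hypersurface $\Sigma$ with constant mean curvature bounding a domain $\Omega$ is umbilic. Since the mean curvature of $\Sigma$ is constant and we use $H=\operatorname{tr}_{g}B$, umbilicity then forces the second fundamental form of $\Sigma$ to be of the form $h_{ij}=c\,g_{ij}$ with $c$ a \emph{constant}. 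What remains is to promote ``umbilic'' to ``geodesic sphere centred at $o_w$'', and it is precisely here that the strictness $\frac{d}{dr}K_{\Pi_{\text{rad}}}<0$ will be used; note it cannot be dispensed with, since for $\frac{d}{dr}K_{\Pi_{\text{rad}}}\equiv 0$ (e.g.\ in $\R$, where every geodesic sphere is umbilic) there are umbilic hypersurfaces that are not centred geodesic spheres.

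To carry out this last step I would feed $h_{ij}=c\,g_{ij}$ with $c$ constant into the Codazzi equation of $\Sigma\hookrightarrow M_w^n$: its left-hand side $\nabla_i h_{jk}-\nabla_j h_{ik}$ vanishes, hence $R^{M_w^n}(e_i,e_j,e_k,\nu)=0$ for every frame $\{e_i\}$ tangent to $\Sigma$. Decomposing each $e_i$ and the unit normal $\nu$ into a component along $\nabla^{M_w^n} r$ and one orthogonal to it, and using proposition~\ref{propModelRadialCurv} (in a frame adapted to the warped product structure of $M_w^n-\{o_w\}$ the only non-vanishing components of $R^{M_w^n}$ are the radial and the fibre sectional curvatures, the fibre being the round sphere), a direct computation yields
\begin{equation*}
R^{M_w^n}(e_i,e_j,e_k,\nu)=(K_{\Pi_{\text{rad}}}-K_{\Pi_{\text{fiber}}})\,\langle \nu,\nabla^{M_w^n} r\rangle\,(\langle T,e_j\rangle\,g_{ik}-\langle T,e_i\rangle\,g_{jk}),
\end{equation*}
where $T=\nabla^{\Sigma}(r|_\Sigma)$ is the component of $\nabla^{M_w^n} r$ tangent to $\Sigma$. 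Tracing over $i$ and $k$ (here $n\geq 3$, so $\dim\Sigma\geq 2$) then gives, at every point of $\Sigma$,
\begin{equation*}
(K_{\Pi_{\text{rad}}}-K_{\Pi_{\text{fiber}}})\,\langle \nu,\nabla^{M_w^n} r\rangle\,T=0 .
\end{equation*}

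Next I would show that in fact $\langle \nu,\nabla^{M_w^n} r\rangle\,T\equiv 0$ on $\Sigma$. If this failed, there would be a point $p$ with $\langle \nu,\nabla^{M_w^n} r\rangle(p)\neq 0$ and $T(p)\neq 0$; then $p$ is a regular point of $r|_\Sigma$, so on a neighbourhood of $p$ both factors stay nonzero while $r|_\Sigma$ is a submersion onto an open interval $I\ni r(p)$, and the displayed identity would force $K_{\Pi_{\text{rad}}}\equiv K_{\Pi_{\text{fiber}}}$ on $I$. But the sectional curvature of an arbitrary $2$-plane of $M_w^n$ at distance $r$ is a convex combination of $K_{\Pi_{\text{rad}}}(r)$ and $K_{\Pi_{\text{fiber}}}(r)$ (a short computation, cf.\ proposition~\ref{propModelRadialCurv}), so $M_w^n$ would be pointwise isotropic on the open region $\{\,r\in I\,\}$, and since $\dim M_w^n=n\geq 3$ Schur's Lemma would make its curvature constant there, contradicting $\frac{d}{dr}K_{\Pi_{\text{rad}}}<0$. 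This is the step I expect to be the main obstacle---it is what pins down the role of the strict inequality; the rest is routine. Finally, with $\langle \nu,\nabla^{M_w^n} r\rangle\,T\equiv 0$ in hand, I would write $\Sigma=A\cup B$ with $A=\{\langle \nu,\nabla^{M_w^n} r\rangle=0\}$ and $B=\{T=0\}$: both are closed, they cover $\Sigma$, and they are disjoint because $\langle \nu,\nabla^{M_w^n} r\rangle^{2}+|T|^{2}=|\nabla^{M_w^n} r|^{2}=1$; as $\Sigma$ is connected, one of them is empty, and $A=\Sigma$ is impossible since then $r|_\Sigma$ would have no critical point on the compact manifold $\Sigma$. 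Hence $B=\Sigma$, so $r|_\Sigma$ is constant and $\Sigma$ is a geodesic sphere centred at $o_w$. (The analytically heavier part of Brendle's argument---the Reilly/Heintze--Karcher estimate producing umbilicity---is already packaged in Theorem~\ref{brendle-theorem}, which is why only the Codazzi/Schur step is genuinely new.)
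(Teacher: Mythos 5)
Your argument takes a genuinely different route from the paper for the new part (item 2). The paper re-runs Brendle's Heintze--Karcher construction and extracts the rigidity from the equality case of the Laplacian comparison (Remark~\ref{equality-model}): equality there forces $\bigl(\tfrac{w'''}{w'}-\tfrac{w''}{w}\bigr)\bigl(1-\langle\nabla^{M_w^n}r,\nu\rangle^2\bigr)=0$, so strict monotonicity of $K_{\Pi_{\text{rad}}}$ gives $\langle\nabla^{M_w^n}r,\nu\rangle^2\equiv 1$ directly. You instead treat item (1) (umbilicity) as a black box and then use the Codazzi equation plus Schur's Lemma to force $\langle\nu,\nabla r\rangle\,T\equiv 0$. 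That is a clean and legitimate alternative; it avoids reopening the Heintze--Karcher proof and also makes transparent why strictness is needed (your $\R$ remark). Two small observations: the trace identity requires $n\geq 3$, which is in force since $K_{\Pi_{\text{fiber}}}$ is only defined for $n\geq 3$; and your appeal to Schur can be replaced by an elementary ODE computation -- $K_{\Pi_{\text{rad}}}=K_{\Pi_{\text{fiber}}}$ on an interval means $(w')^2-ww''\equiv 1$ there, and differentiating gives $\tfrac{w''}{w}=\tfrac{w'''}{w'}$, i.e.\ $\tfrac{d}{dr}K_{\Pi_{\text{rad}}}=0$, a contradiction -- but Schur is perfectly fine.

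However, there is a genuine gap at the end: your topological step assumes $\Sigma$ is connected. The theorem (and the paper's proof) allows $\Sigma$ to have several components, and your argument, applied componentwise, only shows each component is a geodesic sphere centred at $o_w$; it does not by itself rule out, e.g., two concentric spheres bounding an annulus $\Omega$. The paper closes this with a separate sign argument (end of \S\ref{sec:6}, Figure~\ref{figure:anellsno}): since $\eta_w>0$, Proposition~\ref{propWarpMean} says the mean curvature vector of a geodesic sphere always points toward $o_w$, while the outward unit normal $\nu$ to $\Omega$ points away from $o_w$ on an outer boundary sphere and toward $o_w$ on an inner one; hence $h=-\langle\nu,H\rangle$ changes sign between the inner and outer spheres, contradicting constancy of $h$. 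You should add this step (or, alternatively, show that a connected domain $\Omega$ whose boundary components are all centred geodesic spheres and on which $h$ is a single constant must be a ball) to complete the proof.
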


\begin{remark}
Observe that in the above theorem the new item (2) ($\frac{d}{dr}K_{\Pi_{\text{rad}}}<0$) implies (by proposition  \ref{propModelRadialCurv})
\begin{equation}\label{w-inequalitiesII}
\frac{1}{w^2}-\left(\frac{w'}{w}\right)^2\geq -\frac{w''}{w}>-\frac{w'''}{w'} \quad. 
\end{equation}
\end{remark}



An example of a model space satisfying the Hypotheses of the above theorem is a generalized paraboloid 
\begin{example}[Generalized paraboloids] The usual parabolid $P$ in $\erre^3$
\[
P=\{(x,y,z)\in\erre^3 \,\vert \,z=x^2+y^2\}
\]
can be obtained as a surface of revolution with metric tensor
\[
g_P=dr^2+\left(\frac{1}{2}\text{arcsinh}(2r)\right)^2d\theta^2.
\]
In this way, we define the generalized paraboloid as a $n-$dimensional model space $M_w^n$ with warping function
\[
w(r)=\frac{1}{2}\text{arcsinh}(2r).
\]
After a few calculation one can check that
\begin{equation}
\frac{1}{w^2}-\left(\frac{w'}{w}\right)^2\geq -\frac{w''}{w}>-\frac{w'''}{w'} \quad.
\end{equation}
Hence, the closed, embedded, orientable and  constant mean curvature hypersurfaces in the generalized paraboloid are spheres centered at the center of the model.
\end{example}

The above theorem allow us to focus on the isoperimetric problem in such model spaces. The isoperimetric problem in a Riemannian manifold $M$ consists in studying, among the compact hypersurfaces $\Sigma\subset M$ enclosing a region $\Omega$ of volume $\text{V}(\Omega)=v$, those which minimize the area $\text{A}(\Sigma)$.  For a given volume $v$, an \emph{isoperimetric region} is a region of volume $v$ and with minimum area on the boundary. Using \cite[Theorem 1.1]{Ros-Iso} we know that if the $M^n$ is compact the isoperimetric problem has solution and moreover for $n\leq 7$ the isoperimetric hypersurface (the boundary of the isoperimetric region) is smooth and with constant mean curvature. Therefore,  

\begin{corollary}\label{solv-isop}
Let  $(M_w^n,o_w)$ be a $w-$model space with center $o_w$, dimension $n\leq 7$, positive warping function $w$ and positive derivative $w'$ on $(0,\Lambda)$ being $\Lambda$ the radius of the model space. Suppose moreover that the sectional curvatures $K_{\Pi_{\text{fiber}}}$ of planes tangents to the fiber and the radial sectional currvatures $K_{\Pi_{\text{rad}}}$ satisfies the following relation
\begin{equation}
K_{\Pi_{\text{fiber}}}\geq K_{\Pi_{\text{rad}}}.
\end{equation}
Suppose moreover that the radial sectional curvature is a  strictly decreasing  function on the distance to the center of the model ($\frac{d}{dr}K_{\Pi_{\text{rad}}}<0$) and $\Lambda<\infty$. Then the isoperimetric regions are geodesic balls centered at $o_w$.
\end{corollary}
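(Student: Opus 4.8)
The plan is to combine the existence and regularity theory for isoperimetric regions in compact manifolds with the constant mean curvature rigidity furnished by Theorem~\ref{Alexandrov}. Since $w'>0$ on $(0,\Lambda)$ and $\Lambda<\infty$, the distance $r$ to $o_w$ is bounded on $M_w^n$ by $\Lambda$; passing, if necessary, to the metric completion (which adjoins a boundary sphere $\{r=\Lambda\}$ of radius $w(\Lambda)$), the model space becomes a \emph{compact} manifold, namely the closed geodesic ball $\overline{B}_\Lambda(o_w)$. Hence \cite[Theorem 1.1]{Ros-Iso} applies: for each prescribed volume $v\in(0,\Vol(M_w^n))$ there is an isoperimetric region $\Omega=\Omega_v$, and, because $n\le 7$, its boundary $\Sigma=\partial\Omega$ is a smooth, embedded, orientable hypersurface of constant mean curvature.

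The second step is pure rigidity. The standing hypotheses $K_{\Pi_{\text{fiber}}}\ge K_{\Pi_{\text{rad}}}$ and $\tfrac{d}{dr}K_{\Pi_{\text{rad}}}<0$ are precisely the hypotheses of item~(2) of Theorem~\ref{Alexandrov}. Applying that theorem to $\Sigma$ --- a closed, embedded, orientable constant mean curvature hypersurface bounding the domain $\Omega$ --- forces $\Sigma$ to be a single geodesic sphere $\{r=\rho\}$ centered at $o_w$, so that $\Omega$ is either the geodesic ball $B_\rho(o_w)$ or its complement $M_w^n\setminus\overline{B}_\rho(o_w)$.

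It then remains to decide which alternative occurs and to match it with $v$. Here I would use that, since $w'>0$, both $\rho\mapsto\Vol(B_\rho(o_w))=\omega_{n-1}\int_0^\rho w(t)^{n-1}\,dt$ and $\rho\mapsto A(\{r=\rho\})=\omega_{n-1}\,w(\rho)^{n-1}$ are strictly increasing; hence for every $v$ there is a unique radius $\rho(v)$ with $\Vol(B_{\rho(v)}(o_w))=v$, and comparing the two competitors of enclosed volume $v$ --- the ball $B_{\rho(v)}(o_w)$ and the complement $M_w^n\setminus\overline{B}_{\rho'}(o_w)$ with $\Vol(B_{\rho'}(o_w))=\Vol(M_w^n)-v$ --- selects the centered geodesic ball as the minimizer (equivalently, after passing to the complement if necessary, the isoperimetric region is a geodesic ball centered at $o_w$). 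The step I expect to be the main obstacle is not the rigidity, which is handed to us by Theorem~\ref{Alexandrov}, but making the setting fully rigorous: one must check that one is genuinely in the compact framework in which \cite[Theorem 1.1]{Ros-Iso} produces a \emph{smooth} constant mean curvature minimizer for $n\le 7$, and one must exclude minimizers whose boundary fails to be a closed hypersurface of the interior --- in particular a free-boundary piece along $\{r=\Lambda\}$, which Theorem~\ref{Alexandrov} does not address. I would dispose of this last possibility by a comparison argument combining the generalized Jellett--Minkowski formula (the Main Theorem, an equality on the model space $M_w^n$) with the radial foliation of $M_w^n\setminus\{o_w\}$ by the centered distance spheres, showing that any region with a genuine free boundary has strictly larger perimeter than the centered geodesic ball enclosing the same volume.
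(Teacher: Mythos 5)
Your argument follows the paper's proof exactly: the paper's entire justification is the two sentences preceding the corollary, which invoke \cite[Theorem 1.1]{Ros-Iso} for existence and smooth/CMC regularity of the isoperimetric boundary when $n\leq 7$ and then (implicitly) item (2) of Theorem~\ref{Alexandrov} to force that boundary to be a centered geodesic sphere. The compactness and free-boundary subtlety you flag---that with $w'>0$ on $(0,\Lambda)$ and $\Lambda<\infty$ the model $M_w^n$ is an open metric ball whose completion is a manifold \emph{with boundary} $\{r=\Lambda\}$, so one must justify Ros's existence/regularity statement in that setting and exclude competitors whose boundary meets $\{r=\Lambda\}$, to which Theorem~\ref{Alexandrov} does not apply---is a genuine point that the paper glosses over; your sketched Jellett--Minkowski comparison to dispose of it is in the right spirit but is not something the paper supplies either.
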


\begin{remark}
Let us emphasize here, that by \cite[theorem 9.1]{Hugh99} the unique length-minimizing simple closed curve enclosing a given area is a circle centered at the origin for any plane with smooth, rotationally symmetric, complete metric such that the Gauss curvature is a strictly decreasing function from the origin. Hence, the above corollary is a (partial) generalization on the case of dimension greater than $2$. See also \cite[corollary 2.4]{Hub-Mor-2002} and \cite[corollary 2.3]{Mau-Mor-2009}.
\end{remark}

Given a domain $\Omega$ of volume $\text{V}(\Omega)$,  denoting by $\omega_{n-1}$ the volume of the standard sphere $S^{n-1}_1$ and using the following function $\text{Rad}:\erre^+\to \erre^+$ 

\begin{equation}
v\to \text{Rad}(v)\quad \text{such that}\quad v=\omega_{n-1}\int_0^{\text{Rad}(v)}w^{n-1}(t)dt,
\end{equation}
we obtain by the expression of the volume of a geodesic ball $B^w_R$ centered at $o_w$ (see equation (\ref{volume-ball}))
\begin{equation}
 \text{V}(\Omega)=\text{V}(B_{\text{Rad}(\text{V}(\Omega))}^w).
\end{equation} 

 Therefore, taking into account  and the area of a geodesic sphere $S^w_R$ of radius $R$ centered at $o_w$ in a model space (see equation (\ref{volume-sphere}) ), under the hypotheses of corollary \ref{solv-isop} for any hypersurface $\Sigma$ bounding a domain $\Omega$ of volume $V(\Omega)=v$, we have the following isoperimetric inequality:
\begin{equation}\label{isoperime-ine-model}
A(\Sigma)\geq \omega_{n-1} w(\text{Rad}(v))^{n-1}=\text{A}(S^w_{\text{Rad}(v)}).
\end{equation}

Isoperimetric inequalities have several applications, among them, the above inequality allow us to obtain an isoperimetric inequality for the first eigenvalue of the Laplacian for the Dirichlet problem (see \cite{Chavel} for details).
\begin{corollary}\label{tone-isoperimetric}
Under the hypotheses of corollary \ref{solv-isop} suppose moreover that the function 
\begin{equation}
\mathcal{I}:\erre^+\to \erre^+,\quad t\to\mathcal{I}(t)= \frac{\omega_{n-1}w(\text{Rad}(t))^{n-1}}{t}
\end{equation}
is an non-increasing function. Then, for any precompact domain $\Omega\subset M^n_w$ with smooth boundary $\partial \Omega$ and volume $\text{V}(\Omega)=v$, the first eigenvalue of the Laplacian for the Dirichlet problem $\lambda_1(\Omega)$ is bounded from below by 
\begin{equation}
\lambda_1(\Omega)\geq \frac{1}{4}\left[\mathcal{I}\left(v\right)\right]^2.
\end{equation}
\end{corollary}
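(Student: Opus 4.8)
The plan is to deduce the eigenvalue bound from the isoperimetric inequality (\ref{isoperime-ine-model}) via Cheeger's inequality for the Dirichlet Laplacian. Recall that for a precompact domain $\Omega\subset M^n_w$ with smooth boundary the Cheeger constant for the Dirichlet problem is
\begin{equation*}
h(\Omega):=\inf_{\Omega'}\frac{\text{A}(\partial\Omega')}{\text{V}(\Omega')},
\end{equation*}
the infimum being taken over all open $\Omega'$ with smooth boundary and $\overline{\Omega'}\subset\Omega$, and that Cheeger's inequality (see \cite{Chavel}) gives $\lambda_1(\Omega)\geq\tfrac14 h(\Omega)^2$. So it suffices to show $h(\Omega)\geq\mathcal{I}(v)$.

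First I would fix an admissible competitor $\Omega'$ and write $v':=\text{V}(\Omega')$; since $\overline{\Omega'}\subset\Omega$ we have $v'\leq v$. As the hypotheses of corollary \ref{solv-isop} are in force, the isoperimetric inequality (\ref{isoperime-ine-model}) applies to $\Omega'$ — note that (\ref{isoperime-ine-model}) really asserts that the isoperimetric profile of $M_w^n$ is bounded below by the profile of the geodesic balls centered at $o_w$, so it is valid for an \emph{arbitrary} region of volume $v'$, not only for one bounded by a connected smooth hypersurface — and therefore
\begin{equation*}
\frac{\text{A}(\partial\Omega')}{\text{V}(\Omega')}\geq\frac{\omega_{n-1}\,w(\text{Rad}(v'))^{n-1}}{v'}=\mathcal{I}(v').
\end{equation*}
Using that $v'\leq v$ together with the assumed monotonicity of $\mathcal{I}$ one gets $\mathcal{I}(v')\geq\mathcal{I}(v)$, hence $\text{A}(\partial\Omega')/\text{V}(\Omega')\geq\mathcal{I}(v)$ for every admissible $\Omega'$. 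Taking the infimum over $\Omega'$ yields $h(\Omega)\geq\mathcal{I}(v)$, and then Cheeger's inequality gives
\begin{equation*}
\lambda_1(\Omega)\geq\frac14 h(\Omega)^2\geq\frac14\bigl[\mathcal{I}(v)\bigr]^2.
\end{equation*}

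The argument is just a chain of inequalities, so there is no serious obstacle; the only point requiring a little care is the one flagged above, namely that (\ref{isoperime-ine-model}) must be invoked for the possibly irregular competitor regions entering the definition of $h(\Omega)$, which is legitimate precisely because that inequality comes from the solution of the isoperimetric problem in corollary \ref{solv-isop}. A secondary bookkeeping point is that $\text{Rad}(v')$ is well defined for all $v'\leq v<\text{V}(M_w^n)$, since $w>0$ makes $R\mapsto\omega_{n-1}\int_0^R w^{n-1}$ strictly increasing.
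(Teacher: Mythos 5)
Your proposal is correct and follows essentially the same route as the paper: Cheeger's inequality reduces the eigenvalue bound to a lower bound on the Cheeger constant, the isoperimetric inequality (\ref{isoperime-ine-model}) bounds $\text{A}(\partial\Omega')/\text{V}(\Omega')$ below by $\mathcal{I}(\text{V}(\Omega'))$ for each competitor, and then the assumed monotonicity of $\mathcal{I}$ together with $\text{V}(\Omega')\leq v$ finishes the argument. The paper states this more tersely (it does not pause on the regularity of the competitor regions, nor on the well-definedness of $\text{Rad}$), but those are the same steps; your added remarks are reasonable bookkeeping, not a different method.
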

\subsection{Isoperimetric inequalities and the Jellett-Minkowski formula}
In the setting of  a controlled radial  curvature  ambient manifold with a pole, and certain restrictions on the mean curvature of the submanifold,  the generalized Jellett-Minkowski formula is enough to get isoperimetric inequalities (lower bounds to the isoperimetric profile)  or to characterize the $k$-isoperimetric quotient of a domain in the submanifold. The $k-$isoperimetric quotient is defined as follows
\begin{definition}[See also \cite{Chavel-mod-constants, Chavel2}] Given a domain $\Omega$ with smooth boundary $\partial \Omega$, the \emph{$w$-weighted $k$-isoperimetric quotient} $\mathcal{I}^w_k(\Omega)$ is given by  
\begin{equation}
\mathcal{I}^w_k(\Omega):=\frac{\sigma_w(\partial \Omega)}{\mu_w(\Omega)^{\frac{k-1}{k}}} \quad,
\end{equation}
where $\mu_w(\Omega)$ and $\sigma_w(\partial \Omega)$ are the $w$-weighted volume and  $w$-weighted area respectively.
\end{definition}

To recall the definition of  the isoperimetric profile in a Riemannian manifold $M$, let us denote by $\mathcal{O}_M$ the set of relatively compact open subsets of $M$ with smooth boundary. The isoperimetric profile $I_M$ is a function $I_M:[0,\Vol(M)]\to \erre^+$ such that 
\begin{equation}
I_M(v):=\begin{cases}
0 \text{ if } v=0\\
\inf_{\Omega\in \mathcal{O}_M}\{\text{A}(\partial \Omega)\,  : \,\text{V}(\Omega)=v \}
\end{cases}
\end{equation}

In this subsection we will examine under appropriate settings the relation between  the Jellett-Minkowski's generalized formula and the $k-$isoperimetric quotients, the isoperimetric profile, and the modified volume of a submanifold.

\subsubsection{Lower bounds to the $k$-isoperimetric quotient of a domain}

The next application of the  generalized Jellett-Minkowski  formula  will be to obtain lower bounds to the $k$-isoperimetric profile of a precompact domain with smooth boundary. Hence, as a direct consequence of theorem \ref{Jellet-teo} and the H\"older's inequality  we can state  
\begin{corollary}\label{unuacol}Let $\varphi: P^m\to N^n$ be an immersion into a $n-$dimensional ambient manifold $N$ which possesses a pole and its radial sectional curvatures $K_N$ at any point $p\in N$ are bounded by above by the radial curvatures $K_w$ of a model space $M_w^n$ 
 $$
 K_N\left(p\right) \leq K_{w}\left(r\left(p\right)\right)=-\frac{w''}{w}\left(r\left(p\right)\right)\quad.
  $$ 
  Suppose moreover, that $w'>0$ . Then for any  precompact domain $\Omega$ with smooth boundary $\partial \Omega$
\begin{equation}
\mathcal{I}^w_k(\Omega)   \geq \frac{m}{\sup_{\Omega}\vert \tau_w \vert} \mu_w(\Omega)^{1/k}-\left(\int_\Omega\vert H \vert^k d\mu_w\right)^{1/k}\quad.
 \end{equation}
\end{corollary}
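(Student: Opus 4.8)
The plan is to derive this directly from the $\le$ form of the generalized Jellett--Minkowski formula, since the hypothesis $K_N \le -w''/w$ is exactly the one under which Theorem \ref{Jellet-teo} yields the inequality
$$m\,\mu_w(\Omega) + \int_\Omega \langle\tau_w, H\rangle\, d\mu_w \;\le\; \int_{\partial\Omega}\langle\tau_w,\nu\rangle\, d\sigma_w ,$$
and then to replace each of the two inner products by the scalar $\lvert\tau_w\rvert$, turning the boundary term into $\sigma_w(\partial\Omega)$ and the interior term into $\int_\Omega\lvert H\rvert\,d\mu_w$, and finally to pass from the $L^1$ to the $L^k$ norm of $H$ by H\"older's inequality.

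First I would set $S := \sup_\Omega \lvert\tau_w\rvert$, which is finite because $\Omega$ is precompact and $\lvert\tau_w\rvert = w/w'$ is continuous (with the understanding that $\tau_w$ extends continuously by $0$ at the pole if $\Omega$ reaches it). Since $\nu$ is a unit vector, $\langle\tau_w,\nu\rangle \le \lvert\tau_w\rvert \le S$ pointwise on $\partial\Omega$, so the right-hand side above is at most $S\,\sigma_w(\partial\Omega)$. For the interior term I would use $\langle\tau_w,H\rangle \ge -\lvert\tau_w\rvert\,\lvert H\rvert \ge -S\,\lvert H\rvert$, giving $\int_\Omega\langle\tau_w,H\rangle\,d\mu_w \ge -S\int_\Omega\lvert H\rvert\,d\mu_w$. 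Applying H\"older's inequality with exponents $k$ and $k/(k-1)$ to $\lvert H\rvert\cdot 1$ against the measure $d\mu_w$ then yields $\int_\Omega\lvert H\rvert\,d\mu_w \le \bigl(\int_\Omega\lvert H\rvert^k\,d\mu_w\bigr)^{1/k}\mu_w(\Omega)^{(k-1)/k}$. Substituting both estimates into the displayed inequality produces
$$m\,\mu_w(\Omega) - S\,\Bigl(\int_\Omega\lvert H\rvert^k\,d\mu_w\Bigr)^{1/k}\mu_w(\Omega)^{\frac{k-1}{k}} \;\le\; S\,\sigma_w(\partial\Omega),$$
and dividing through by $S$ and then by $\mu_w(\Omega)^{(k-1)/k}>0$, together with the definition $\mathcal{I}^w_k(\Omega) = \sigma_w(\partial\Omega)/\mu_w(\Omega)^{(k-1)/k}$ and the identity $1-(k-1)/k = 1/k$, gives exactly the claimed lower bound for $\mathcal{I}^w_k(\Omega)$.

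There is essentially no hard step here: the whole argument is Theorem \ref{Jellet-teo} followed by two applications of Cauchy--Schwarz at the level of pointwise inner products and one application of H\"older. The only points worth a sentence of justification are the finiteness of $S = \sup_\Omega\lvert\tau_w\rvert$ (precompactness together with $w'>0$) and the harmlessness of the Cauchy--Schwarz estimate on $\langle\tau_w,\nu\rangle$, which costs nothing because $\nu$ is unit; one could sharpen the boundary term to use $\sup_{\partial\Omega}\lvert\tau_w\rvert$ instead, but the uniform bound over $\Omega$ is what is needed to control the mean-curvature integral, so the stated form is the natural one.
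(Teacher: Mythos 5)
Your proof is correct and follows essentially the same route as the paper: apply the $\le$ direction of the generalized Jellett--Minkowski formula, bound $\langle\tau_w,\nu\rangle$ above and $\langle\tau_w,H\rangle$ below by $\sup_\Omega\lvert\tau_w\rvert$ times the obvious scalars, pass from the $L^1$ to the $L^k$ norm of $\lvert H\rvert$ via H\"older with exponents $k$ and $k/(k-1)$, and divide by $\sup_\Omega\lvert\tau_w\rvert$ and by $\mu_w(\Omega)^{(k-1)/k}$. The paper carries out exactly these estimates (displaying the intermediate inequality $\sup_\Omega\lvert\tau_w\rvert\,\sigma_w(\partial\Omega)\ge m\,\mu_w(\Omega)-\sup_\Omega\lvert\tau_w\rvert\,\bigl(\int_\Omega\lvert H\rvert^k d\mu_w\bigr)^{1/k}\mu_w(\Omega)^{(k-1)/k}$), so there is no substantive difference.
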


\subsubsection{Immersions into a geodesic ball. Isoperimetric profile, total mean curvature and Parabolicity}
Applying corollary \ref{unuacol} to the special setting of an immersion into a geodesic ball in a Cartan-Hadamard ambient manifold we obtain the following corollary for the isoperimetric profile.  
\begin{corollary}\label{bola}
Let $\varphi:P^m\to B_R^N(o)$ be an complete immersion into a geodesic ball $B_R^N(o)$ of radius $R$ centered in $o\in N$ in a  Cartan-Hadamart ambient manifold $N$ with  sectional curvatures $K_N\leq 0$ bounded from above. Suppose that the submanifold has finite $k$-norm of the mean curvature, namely, $\int_P \vert H \vert^k dV <\infty$,  for some $k>1$. Then the isoperimetric profile is bounded from below by
\begin{equation}
I_P(v)\geq \left(\frac{mv^{1/k}}{R}-\left(\int_P\vert H \vert^kdV\right)^{1/k}\right)v^{\frac{k-1}{k}}\quad,
\end{equation}
where $dV$ is the Riemannian density in $P$.
\end{corollary}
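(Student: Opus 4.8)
The plan is to read off Corollary \ref{bola} from Corollary \ref{unuacol} by taking as comparison model the Euclidean space $\R$, i.e.\ the $w$-model with warping function $w(r)=r$. First I would check that this is an admissible choice: since $N$ is Cartan--Hadamard the center $o$ is a pole, and for $w(r)=r$ one has $-w''/w\equiv 0$, so the hypothesis $K_N(p)\leq -\frac{w''}{w}(r(p))$ of Corollary \ref{unuacol} is precisely the assumption $K_N\leq 0$; moreover $w'\equiv 1>0$. With this choice the weighted objects degenerate to the ordinary ones: $d\mu_w=dV$, $d\sigma_w=dA$, $\mu_w(\Omega)=V(\Omega)$, $\sigma_w(\partial\Omega)=A(\partial\Omega)$, and $\tau_w=r\,\nabla^N r$, so $\abs{\tau_w}=r$.

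Next I would take an arbitrary relatively compact open $\Omega\subset P$ with smooth boundary (a generic element of $\mathcal{O}_P$). Since $\varphi(P)\subset B_R^N(o)$, the function $r\circ\varphi$ is at most $R$ on $\Omega$, whence $\sup_\Omega\abs{\tau_w}=\sup_\Omega r\leq R$. Feeding this, together with $\int_\Omega\abs{H}^k dV\leq \int_P\abs{H}^k dV<\infty$, into Corollary \ref{unuacol} (with $k>1$) gives
\[
\frac{A(\partial\Omega)}{V(\Omega)^{\frac{k-1}{k}}}=\mathcal{I}^w_k(\Omega)\geq \frac{m}{\sup_\Omega\abs{\tau_w}}\,V(\Omega)^{1/k}-\left(\int_\Omega\abs{H}^k dV\right)^{1/k}\geq \frac{m}{R}\,V(\Omega)^{1/k}-\left(\int_P\abs{H}^k dV\right)^{1/k}.
\]
Multiplying through by $V(\Omega)^{\frac{k-1}{k}}$, using $V(\Omega)^{1/k}\,V(\Omega)^{\frac{k-1}{k}}=V(\Omega)$, then restricting to those $\Omega$ with $V(\Omega)=v$ and taking the infimum, I would obtain
\[
I_P(v)\geq \left(\frac{m v^{1/k}}{R}-\left(\int_P\abs{H}^k dV\right)^{1/k}\right)v^{\frac{k-1}{k}},
\]
which is the assertion (and is trivially true whenever the right-hand side is nonpositive).

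I do not expect a genuine obstacle here; it is essentially a substitution into Corollary \ref{unuacol}. The only points that deserve an explicit line of justification are: that $\R$ with $w(r)=r$ legitimately plays the role of the comparison model (reducing to $o$ being a pole of the Cartan--Hadamard manifold $N$ and to $-w''/w\equiv 0\geq K_N$); that $\sup\abs{\tau_w}=\sup r\leq R$ on the image of any $\Omega$ because the whole immersion lands in $B_R^N(o)$; and that the global finiteness $\int_P\abs{H}^k dV<\infty$ bounds $\int_\Omega\abs{H}^k dV$ uniformly in $\Omega$, so the estimate survives the infimum defining $I_P(v)$. (Completeness of $\varphi$ is not actually needed for this particular inequality.)
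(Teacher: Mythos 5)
Your proposal is correct and takes essentially the same route as the paper: both specialize Corollary \ref{unuacol} to the warping function $w(r)=r$ (so that $d\mu_w=dV$, $d\sigma_w=dA$, $|\tau_w|=r\leq R$), bound $\int_\Omega|H|^k dV$ by $\int_P|H|^k dV$, and then pass to the infimum defining $I_P(v)$. Your write-up merely makes explicit the small verifications that the paper leaves implicit.
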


\begin{remark}The existence of isometric and complete immersions into a ball in in a Cartan-Hadamard is out of any doubt. In fact, by the celebrated Nash's imbedding theorem \cite{Nash},  any Riemannian $n$-manifold with a $C^\infty$ positive metric has a $C^\infty$ isometric imbedding in $\frac{1}{2}(n+1)(3n+1)$-dimensional Euclidean space, and in fact, in any small portion of this space (as for instance a geodesic ball of this space).

  The existence of immersions with finite integral of the norm of the mean curvature is also well known (at least in the limit case $H=0$).  In \cite{Morales1,Morales2} F. Mart\'in and S. Morales in -according to S.T Yau- highly non-trivial refinement of a Nadirashvili's method \cite{Nadi} constructed a complete minimal immersion of a $2$-dimensional disc into every open convex set of $\erre^3$.
\end{remark}

\begin{remark} From corollary \ref{unuacol} applying as a warping function $w(r)=r$ we can deduce that if we have a submanifold $P$ immersed in a ball $B_R^N(o)$ of radius $R$ centered at $o\in N$ in a Cartan-Hadamard ambient manifold $N$ with norm of the mean curvature vector $\vert H \vert $ bounded from above by
\begin{equation}
\vert H \vert \leq \frac{m\, \epsilon}{R}\quad,
 \end{equation} 
for $0<\epsilon <1$, the isoperimetric quotients are bounded from below by
\begin{equation}
I_k(\Omega)\geq\frac{m(1-\epsilon)}{R}\text{V}(\Omega)^{1/k}\quad,
\end{equation}
for any $k>1$. But that implies
\begin{equation}
\frac{\text{A}(\partial \Omega)}{\text{V}(\Omega)}\geq \frac{m(1-\epsilon)}{R}\quad,
\end{equation}
and therefore $P$ has positive Cheeger constant, positive fundamental tone and $P$ is Hyperbolic (see \cite{GriExp} for the relation of positive fundamental tone and non-parabolicity).

Hence, the mean curvature of an isometric immersion into a geodesic ball of a Cartan-Hadamard manifold  is closely related to the conformal type problem.
\end{remark}
The geodesic balls are the isoperimetric domains in the Euclidean space. Therefore, in the $n$-dimensional Euclidean space, the isoperimetric profile is given by
\begin{equation}
I_{\erre^n}(v)=C_nv^{\frac{n-1}{n}}\quad.
\end{equation}  
 Applying corollary \ref{bola}, proving by contradiction, we can easily state that for any immersion of $\erre^n$ into  a ball $B_R^{\erre^{n+k}}$ of $\erre^{n+k}$, $\int_{\erre^n}\vert H \vert^n dV=\infty$.  In general
\begin{corollary}\label{bola15}
Let $\varphi:P^m\to B_R^N(o)$ be an complete immersion into a geodesic ball $B_R^N(o)$ of radius $R$ centered in $o\in N$ in a  Cartan-Hadamart ambient manifold $N$ with  sectional curvatures $K_N\leq 0$ bounded from above. 

Suppose that for some $k>1$ there exist a constant $C_k$ such that the isoperimetric profile is bounded from above by

\begin{equation}
I_{P}(v)\leq C_k v^{\frac{k-1}{k}}\quad.
\end{equation}  

Then, either
 \begin{equation}
 \int_P \vert H \vert^k dV =\infty\quad .
\end{equation}
or 
\begin{equation}
\text{V}(P)\leq\left(\frac{C_k+\left(\int_P \vert H\vert^k d\text{V}\right)^{\frac{1}{k}}}{m}\right)^kR^k\quad.
\end{equation}
\end{corollary}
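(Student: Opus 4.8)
The plan is to prove the contrapositive form of the dichotomy: assuming $\int_P\vert H\vert^k\,dV<\infty$, I would establish the stated bound on $\text{V}(P)$. The essential ingredient is Corollary \ref{bola}, whose hypotheses coincide with those here and which then provides, for every admissible volume $v$,
\begin{equation*}
I_P(v)\ge\left(\frac{m\,v^{1/k}}{R}-\left(\int_P\vert H\vert^k\,dV\right)^{1/k}\right)v^{\frac{k-1}{k}}.
\end{equation*}

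First I would combine this lower bound with the hypothesised upper bound $I_P(v)\le C_k\,v^{\frac{k-1}{k}}$. For every $v$ with $0<v<\text{V}(P)$ this gives
\begin{equation*}
C_k\,v^{\frac{k-1}{k}}\ge\left(\frac{m\,v^{1/k}}{R}-\left(\int_P\vert H\vert^k\,dV\right)^{1/k}\right)v^{\frac{k-1}{k}},
\end{equation*}
and, dividing by $v^{\frac{k-1}{k}}>0$ and solving for $v^{1/k}$,
\begin{equation*}
v^{1/k}\le\frac{R}{m}\left(C_k+\left(\int_P\vert H\vert^k\,dV\right)^{1/k}\right),\qquad\text{hence}\qquad v\le\left(\frac{C_k+\left(\int_P\vert H\vert^k\,dV\right)^{1/k}}{m}\right)^k R^k.
\end{equation*}

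Since the last inequality holds for every $v$ strictly below $\text{V}(P)$, it remains to let $v\uparrow\text{V}(P)$. Here I would use that $P$ is complete to exhaust it by an increasing sequence of relatively compact open sets with smooth boundary (regular sublevel sets of a smooth proper exhaustion function, obtained via Sard's theorem) whose volumes tend to $\text{V}(P)$; thus $I_P$ is defined on a set of volumes with supremum $\text{V}(P)$, and passing to the supremum yields the claimed estimate. If instead $\int_P\vert H\vert^k\,dV=\infty$ we are directly in the first alternative of the statement, so the dichotomy holds in all cases.

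I do not anticipate a genuine obstacle: the argument is an elementary manipulation of the monotone function $t\mapsto t^{1/k}$ together with Corollary \ref{bola}. The only point requiring a word of care is the passage $v\uparrow\text{V}(P)$ — in particular, if $\text{V}(P)=\infty$ the displayed bound cannot hold for all $v$, which is exactly why, under the assumption $\int_P\vert H\vert^k\,dV<\infty$, the volume must in fact be finite and bounded as stated.
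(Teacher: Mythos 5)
Your proposal is correct and follows essentially the same route the paper intends: the paper introduces Corollary~\ref{bola15} with the remark that it is obtained ``Applying corollary \ref{bola}, proving by contradiction,'' and your argument — combining the lower bound of Corollary~\ref{bola} with the hypothesised upper bound, dividing by $v^{(k-1)/k}$, isolating $v^{1/k}$, and letting $v\uparrow\text{V}(P)$ via an exhaustion by precompact smooth domains — is exactly that contradiction/contrapositive made explicit, with the observation that Corollary~\ref{bola} is only applicable when $\int_P\vert H\vert^k\,dV<\infty$ accounting for the first alternative.
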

 
If moreover the submanifold has the  non-shrinking property (see definition below) we can state further geometric and analytic properties   
\begin{definition}[Non-shrinking property] A manifold $M$ of infinite volume has the \emph{non-shrinking property} if for any $v>0$ there exist $R_v$ such that
\begin{equation}
\inf_{x\in M} \text{V}(B_{R_v}^M(x))\geq v\quad.
\end{equation}
\end{definition}

Let us emphasize here, that there exist  well known manifolds with that property  
\begin{example}[Non-shrinking manifolds]
The most elemental example of a manifold with non-shrinking property is $\erre^n$. Since given a point $x \in \erre^n$ the volume $V(B_R(x))$ of   a geodesic ball of radius $R$ centered at $x$ is an increasing function of $R$, for any $v>0$ one can easily found $R_v$ such that $V(B_{R_v}(x))>v$, and by the homogeneity of $\erre^n$ we obtain the non-shrinking property. The same is true for the hyperbolic space $\mathbb{H}^n$,  and in general for any Cartan-Hadamard manifold.  
\end{example}

Using that non-shrinking property we can state that

\begin{corollary}\label{bola2}
Under the assumptions of corollary \ref{bola}, suppose moreover that $P$ has  non-shrinking property. Then
\begin{enumerate}
\item For any point $x\in P$ the geodesic balls of $P$, $B^P_R(x)$, of radius $R$ centered at $x$ satisfies
\begin{equation}
\liminf_{R\to \infty}R^{-k}\text{V}(B^P_R(x))>0,
\end{equation}
\item If moreover $k>2$, $P$ possesses a positive Green's function (or equivalently, it has transient Brownian motion).
\end{enumerate}
\end{corollary}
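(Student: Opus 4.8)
The plan is to convert the isoperimetric lower bound of Corollary~\ref{bola} into a lower bound for the volume growth of the intrinsic geodesic balls of $P$, and then to read off both conclusions from standard facts about such manifolds.

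First I would extract from Corollary~\ref{bola} a \emph{linear} isoperimetric inequality valid in the range of large volumes. Writing $C:=\int_P|H|^k\,dV<\infty$, that corollary gives $I_P(v)\ge\big(\tfrac{m}{R}v^{1/k}-C^{1/k}\big)v^{(k-1)/k}$; as soon as $v\ge v_1:=(2R/m)^k C$ the first factor is at least $\tfrac{m}{2R}v^{1/k}$, and since $v^{1/k}v^{(k-1)/k}=v$ this yields
\[
I_P(v)\ \ge\ \frac{m}{2R}\,v\qquad (v\ge v_1),
\]
that is, a positive isoperimetric (Cheeger-type) constant at infinity.

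For part (1) I would fix $x\in P$ and put $V(t):=V(B^P_t(x))$. Since $P$ is complete, $\overline{B^P_t(x)}$ is compact and the coarea formula for the distance function $r=\dist_P(x,\cdot)$ gives the classical differential inequality $V'(t)\ge I_P\big(V(t)\big)$ for a.e.\ $t$ (the derivative of $V$ is, a.e., the area of the distance sphere, which dominates the perimeter of the ball). The non-shrinking hypothesis provides $\rho:=R_{v_1}$ with $V(B^P_\rho(y))\ge v_1$ for \emph{every} $y\in P$; in particular $V(t)\ge v_1$ for all $t\ge\rho$, so the linear bound above gives $V'(t)\ge\tfrac{m}{2R}V(t)$ for a.e.\ $t\ge\rho$. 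Integrating this inequality yields $V(t)\ge v_1\,e^{\frac{m}{2R}(t-\rho)}$ for $t\ge\rho$, hence in particular $\liminf_{t\to\infty}t^{-k}V(t)=+\infty>0$, which is (1) (with exponential growth to spare). For part (2) I would invoke the volume-growth criterion for non-parabolicity recalled in \cite{GriExp}: a complete Riemannian manifold admits a positive Green's function — equivalently, has transient Brownian motion — whenever $\int_1^\infty t\,V(B^P_t(x))^{-1}\,dt<\infty$ for one (hence every) $x$. By (1) there is $c>0$ with $V(B^P_t(x))\ge c\,t^k$ for all large $t$, and $\int^\infty t^{1-k}\,dt$ converges precisely when $k>2$; this is exactly where that extra hypothesis enters, and it finishes (2).

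The substantive work has already been done in Corollary~\ref{bola}, so there is no serious obstacle remaining; the two points that require care are the justification of $V'(t)\ge I_P(V(t))$ away from the cut locus — comparing the $(m-1)$-dimensional measure of the distance sphere with the perimeter of the ball — and the use of the non-shrinking property to make the threshold $v_1$ reachable \emph{uniformly in the center}, which is what promotes the differential inequality, and hence the volume lower bound, from a single base point to all of $P$ simultaneously, as is needed to quote the non-parabolicity test cleanly.
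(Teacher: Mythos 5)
Your argument for part~(1) is correct and, in fact, proves more than the statement asks: the bound from Corollary~\ref{bola} simplifies to $I_P(v)\ge \tfrac{m}{R}v-C^{1/k}v^{(k-1)/k}$, so once $v\ge v_1=(2R/m)^kC$ it dominates $\tfrac{m}{2R}v$, a \emph{linear} (Cheeger-type) isoperimetric inequality at large volumes. Integrating $V'(t)\ge I_P(V(t))$ past the threshold $\rho$ supplied by the non-shrinking property gives exponential volume growth, which trivially implies $\liminf R^{-k}V(B^P_R(x))>0$. This is a more explicit route than the paper's, which positivity of the Chavel--Feldman modified isoperimetric constant $\mathcal{I}_{k,\rho}(P)$ and then quotes their Theorem~5 and inequality~(15) for both conclusions; your version is self-contained and actually sharpens~(1).

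For part~(2), however, there is a genuine gap: you invoke a ``volume-growth criterion for non-parabolicity,'' namely that $\int_1^\infty t\,V(B^P_t(x))^{-1}\,dt<\infty$ implies the existence of a positive Green's function. That implication is false in general and the result actually recalled in \cite{GriExp} goes in the opposite direction: divergence of $\int^\infty t\,V(B_t)^{-1}\,dt$ forces parabolicity, but its convergence does \emph{not} force non-parabolicity. A spherically symmetric surface with metric $dr^2+f(r)^2\,d\theta^2$ where $f(r)\sim r^2$ away from a sparse sequence of very thin ``bottleneck'' intervals has $V(B_r)\sim r^3$ (hence $\int^\infty r\,V(B_r)^{-1}\,dr<\infty$) while $\int^\infty f(r)^{-1}\,dr=\infty$, so it is parabolic. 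Fast volume growth alone therefore cannot give~(2). The hypothesis you need, and indeed already have, is the isoperimetric inequality itself: a positive Cheeger-type constant on domains of volume $\ge v_1$ (equivalently, positivity of the Chavel--Feldman modified isoperimetric constant $\mathcal{I}_{k,\rho}$) \emph{does} imply non-parabolicity, and this is precisely what the paper extracts from \cite{Chavel-mod-constants}. To repair your argument, replace the volume criterion by the isoperimetric one: deduce from $I_P(v)\ge \tfrac{m}{2R}v$ for $v\ge v_1$ (or from the original $k$-isoperimetric bound, using $k>2$) that the relevant capacity is positive, either by citing Chavel--Feldman directly or by running the standard capacity estimate based on an isoperimetric/Faber--Krahn inequality rather than on volume growth.
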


As a reverse of the above corollary we can state

\begin{corollary}
Let $\varphi: P^m\to B^N_R(0)$ be an isometric immersion of a parabolic submanifold $P$ into the ball $B_R^N(o)$ of radius $R$ centered at $o\in N$ in a Cartan-Hadamard manifold $N$.
Then:
\begin{enumerate}
\item or, for any $k>2$ 
$$
\int_P \vert H \vert^k dV =\infty\quad .
$$
\item Or, $P$ has not the non-shrinking property.
\end{enumerate}
\end{corollary}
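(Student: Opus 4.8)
The plan is to obtain the statement as a straightforward contrapositive of Corollary \ref{bola2}. Recall that a (complete) Riemannian manifold is parabolic exactly when it carries no positive Green's function, equivalently when Brownian motion on it is recurrent. Thus it suffices to show that if $P$ were to possess the non-shrinking property \emph{and} to satisfy $\int_P |H|^k\, dV < \infty$ for some single exponent $k>2$, then $P$ would be non-parabolic, contradicting the hypothesis.

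First I would assume, towards a contradiction, that $P$ has the non-shrinking property and that there exists some $k_0 > 2$ with $\int_P |H|^{k_0}\, dV < \infty$. Since $\varphi: P^m \to B_R^N(o)$ is a complete isometric immersion into a geodesic ball of a Cartan-Hadamard manifold $N$ (so in particular $K_N \leq 0$), and since $k_0 > 2 > 1$, the manifold $P$ fulfils all the hypotheses of Corollary \ref{bola} for the exponent $k_0$; adjoining the non-shrinking assumption places us exactly in the setting of Corollary \ref{bola2}. Applying part (2) of that corollary with $k = k_0 > 2$ yields that $P$ admits a positive Green's function, i.e. Brownian motion on $P$ is transient and $P$ is non-parabolic. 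This contradicts the standing assumption that $P$ is parabolic. Hence the two assumptions cannot hold simultaneously: either $P$ fails to have the non-shrinking property, or no exponent $k>2$ produces a finite $k$-norm of the mean curvature, that is, $\int_P |H|^k\, dV = \infty$ for every $k>2$.

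There is no analytic difficulty here beyond what is already packaged in Corollaries \ref{bola} and \ref{bola2}; the only points that need care are logical ones. One must parse ``for any $k>2$, $\int_P|H|^k\,dV=\infty$'' correctly: its negation is the existence of a \emph{single} $k_0>2$ with finite $k_0$-norm, and it is precisely this $k_0$ that is fed into Corollary \ref{bola2}(2), whose conclusion for $k>2$ refers to the very exponent appearing in the finite-norm hypothesis. One should also observe that completeness of $P$ is implicit in speaking of a parabolic manifold, so the completeness requirement of Corollary \ref{bola} is met, while the curvature bound $K_N\leq 0$ and the fact that the target is a geodesic ball are part of the hypotheses of the present corollary. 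With these bookkeeping checks in place the contrapositive argument closes.
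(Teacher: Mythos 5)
Your proof is correct and follows exactly the paper's intended route: the corollary is simply the contrapositive of Corollary \ref{bola2}(2), and you correctly parse the negation of the universally quantified conclusion as supplying a single exponent $k_0>2$ with $\int_P|H|^{k_0}\,dV<\infty$, which is precisely what Corollaries \ref{bola} and \ref{bola2} require. The paper offers no separate proof of this statement (it is introduced with ``As a reverse of the above corollary we can state''), so your contrapositive argument coincides with the intended one.
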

By the above corollary any immersion of a parabolic manifold with non-positive sectional curvature into a geodesic ball of a Cartan-Hadamard manifold has infinite norm of its mean curvature vector.
\subsubsection{The volume of complete non-compact submanifolds into a Cartan-Hadamard ambient space}

In \cite{Caval2012} unifying results from \cite{Cheung1998,do2010,Frensel1996,Fu2010} is proved that given an isometric immersion $\varphi: P \to N$   of a complete non-compact manifold $P$ in a manifold $N$ with bounded geometry (i.e., $N$ has sectional curvature bounded from above and injectivity radius bounded from
below by a positive constant),  if  any end $E$ of $P$ has finite $L^p$-norm of the
mean curvature vector of $\varphi$,  $\Vert H \Vert_{L^p(E)} < \infty$, for some $m \leq p \leq \infty$ then $E$ must have infinite volume.

Note that $p\geq m$ is not a removable condition. Indeed in Example 4.3 of \cite{Caval2012}  is shown  a complete non-compact hypersurface $P^m$ in $\erre^{m+1}$ , with  $m\geq 3$, of finite volume and mean curvature vector with finite $L^p$-norm, for any $0\leq p <m-1$.

In the particular case of a $m$-dimensional submanifold in a Cartan-Hadamard ambient manifold with sectional curvatures bounded from above by a negative constant $b$, for any $p\geq 2$, and any dimension $m$ of the submanifold , we can obtain lower bounds for the $w_b$-weighted volume of the submanifold in terms of the $w_b$-weighted $L^p$-norm of the norm of the mean curvature vector, being $w_b$ the warping function given in remark \ref{propSpaceForm}, i.e.,
$$
w_b=\frac{1}{\sqrt{-b}}\sinh(\sqrt{-b}\,r)\quad.
$$ 
In such a setting we can state
\begin{corollary}\label{cor-volume}
Let $\varphi:P^m\to N$ be an immersion into a Cartan-Hadamard manifold $N$ with sectional curvatures $K_N$ bounded from above by a negative constant $K_N\leq b <0$. Then for any $p\geq 2$, 
\begin{enumerate}
\item either 
$$
\mu_{w_b}(P)^{\frac{1}{p}}\leq \frac{\Vert H \Vert _{L^p_{w_b}(P)}}{m\sqrt{-b}}\quad,
$$
\item or 
$$
\mu_{w_b}(P)=\infty\quad.
$$
\end{enumerate}
where  $\mu_{w_b}(P)$ is the $w_b$-weighted volume of $P$, and $\Vert H \Vert _{L^p_{w_b}(P)}$ is  the $w_b$-weighted $L^p$-norm of the norm of the mean curvature vector, namely
\begin{equation}
\Vert H \Vert _{L^p_{w_b}(P)}=\left(\int_{P}\vert H \vert^pd\mu_{w_b}\right)^{\frac{1}{p}}\quad.
\end{equation}
\end{corollary}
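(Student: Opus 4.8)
The plan is to dispose of the trivial cases, apply the Main Theorem (the Jellett--Minkowski generalized formula) to an exhaustion of $P$ by geodesic balls, and let the radius tend to infinity. Fix a pole $o\in N$ and write $r=\dist_N(\cdot,o)$; the hypothesis $K_N\le b<0$ guarantees $N$ is Cartan--Hadamard and has a pole, and $w_b'=\cosh(\sqrt{-b}\,r)>0$, so the Main Theorem is applicable. If $\mu_{w_b}(P)=\infty$ we are in alternative~(2), so assume $\mu_{w_b}(P)<\infty$; since $w_b'\ge 1$ this also forces $\Vol(P)\le\mu_{w_b}(P)<\infty$. If $\Vert H\Vert_{L^p_{w_b}(P)}=\infty$ the inequality in~(1) holds trivially, so we may assume in addition $\Vert H\Vert_{L^p_{w_b}(P)}<\infty$, and it remains to prove~(1). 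I would first record two elementary pointwise facts: a direct computation gives $-w_b''/w_b\equiv b$, so $K_N\le b$ is exactly the ``$\le$'' case of the Main Theorem with model space $M_{w_b}^n$; and $\tau_{w_b}=(w_b/w_b')\nabla^N r=\tfrac{1}{\sqrt{-b}}\tanh(\sqrt{-b}\,r)\,\nabla^N r$, so $\abs{\tau_{w_b}}=\tfrac{1}{\sqrt{-b}}\tanh(\sqrt{-b}\,r)\le\tfrac{1}{\sqrt{-b}}$ everywhere on $P$. Feeding these into the Main Theorem, together with the Cauchy--Schwarz bounds $\langle\tau_{w_b},\nu\rangle\le\abs{\tau_{w_b}}$ and $-\langle\tau_{w_b},H\rangle\le\abs{\tau_{w_b}}\,\abs{H}$ and then Hölder's inequality with exponents $p$ and $p/(p-1)$ on the mean curvature integral, I obtain for every precompact domain $\Omega\subset P$ with smooth boundary
\[
m\,\mu_{w_b}(\Omega)\ \le\ \frac{1}{\sqrt{-b}}\Big(\sigma_{w_b}(\partial\Omega)+\Vert H\Vert_{L^p_{w_b}(P)}\,\mu_{w_b}(\Omega)^{\frac{p-1}{p}}\Big).
\]

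Next I would pass to the limit along an exhaustion. Fix $x_0\in P$, put $\rho=\dist_P(\cdot,x_0)$, and set $V(t):=\mu_{w_b}\big(B_t^P(x_0)\big)$; this is non-decreasing, bounded above by $\mu_{w_b}(P)<\infty$, with $V(t)\to\mu_{w_b}(P)$ as $t\to\infty$. By the coarea formula, using $\abs{\nabla^P\rho}\le 1$, one has $V'(t)\ge\sigma_{w_b}\big(\partial B_t^P(x_0)\big)$ for a.e.\ $t$, and since $\int_0^\infty V'(t)\,dt\le\mu_{w_b}(P)<\infty$ the derivative $V'$ cannot stay bounded away from $0$; hence there is a sequence $t_j\to\infty$ with $\sigma_{w_b}\big(\partial B_{t_j}^P(x_0)\big)\to 0$. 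Applying the displayed inequality with $\Omega=B_{t_j}^P(x_0)$ (after the smoothing of $\rho$ discussed below) and letting $j\to\infty$, the boundary term disappears while $\mu_{w_b}(B_{t_j}^P(x_0))\to\mu_{w_b}(P)$, so that
\[
m\,\mu_{w_b}(P)\ \le\ \frac{1}{\sqrt{-b}}\,\Vert H\Vert_{L^p_{w_b}(P)}\,\mu_{w_b}(P)^{\frac{p-1}{p}};
\]
dividing both sides by the finite positive quantity $\mu_{w_b}(P)^{(p-1)/p}$ gives precisely the inequality of alternative~(1).

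The curvature hypothesis enters only through the two pointwise facts isolated above, and Hölder's inequality does the rest, so I expect the genuinely delicate point to be the exhaustion: producing precompact domains $\Omega_j\uparrow P$ with smooth boundary and $\sigma_{w_b}(\partial\Omega_j)\to 0$. The metric balls $B_t^P(x_0)$ need not have smooth boundary, and I would handle this in the standard way --- replace $\rho$ by a smooth proper exhaustion $u$ with $\abs{\nabla^P u}\le 1$ and $\abs{u-\rho}$ bounded, so that by Sard's theorem $\{u<t\}$ is precompact with smooth boundary for a.e.\ $t$, and run the monotone-derivative argument for $t\mapsto\mu_{w_b}(\{u<t\})$. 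This step uses the completeness of $P$ (implicit in the setting) and, crucially, $\mu_{w_b}(P)<\infty$, which is what prevents the boundary $w_b$-area from remaining bounded below along every exhaustion. Finally, the argument actually works verbatim for every $p\ge 1$; the restriction $p\ge 2$ in the statement serves only to match the context of the results quoted from \cite{Caval2012}.
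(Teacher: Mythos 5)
Your proof is correct and shares the core strategy of the paper: apply the Jellett--Minkowski generalized formula plus H\"older's inequality with the pointwise bound $\abs{\tau_{w_b}}\le 1/\sqrt{-b}$, then pass to the limit along an exhaustion by precompact smooth domains in which the boundary contribution vanishes. Where you differ is in the choice of exhaustion and the limit mechanism. The paper exhausts $P$ by \emph{extrinsic} balls $D_R=\varphi^{-1}(B_R^N(o))$, combines the isoperimetric inequality with the coarea bound $\sigma_{w_b}(\partial D_R)\le\frac{d}{dR}\mu_{w_b}(D_R)$ to produce the differential inequality
\[
m\sqrt{-b}-\Vert H\Vert_{L^p_{w_b}(P)}\,\mu_{w_b}(D_{R_0})^{-1/p}\ \le\ \frac{d}{dR}\ln\mu_{w_b}(D_R)\quad(R\ge R_0),
\]
and then uses finiteness of $\mu_{w_b}(P)$ to extract a divergent sequence $R_i$ with $\frac{d}{dR}\ln\mu_{w_b}(D_{R_i})\to 0$, concluding by letting $R_0\to\infty$. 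You instead use \emph{intrinsic} metric balls $B_t^P(x_0)$ (regularized via a smooth proper exhaustion and Sard's theorem) and directly extract radii $t_j\to\infty$ along which $\sigma_{w_b}(\partial B_{t_j}^P)\to 0$, then pass to the limit in the isoperimetric inequality itself. The two are essentially equivalent in content; one small advantage of your route is that it does not tacitly require the extrinsic balls $D_R$ to be precompact (i.e.\ it avoids an implicit properness assumption on $\varphi$), while the paper's route is a bit slicker because $\partial D_R$ is automatically smooth for a.e.\ $R$, so no regularization is needed. Your closing observation that the argument works for every $p\ge 1$ (so that $p\ge 2$ in the statement is not actually used) is also correct.

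A couple of minor points to keep in mind: achieving $\abs{\nabla^P u}\le 1$ exactly for a smooth exhaustion is not standard, but you only need some uniform bound $\abs{\nabla^P u}\le C$ for the coarea argument to go through, so this does not affect the conclusion. And before dividing by $\mu_{w_b}(P)^{(p-1)/p}$ you should (trivially) note $\mu_{w_b}(P)>0$.
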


\subsubsection{Isoperimetric inequalities on  minimal submanifolds}
Given a simple close curve $C$ in the flat plane, bounding a domain $D$. Denoting by $L$ and $A$ the length of $C$ and the area of $D$ respectively, the classical isoperimetric inequality states that
$$
4\pi A\leq L^2\quad.
$$
For minimal submanifolds $P$ with smooth boundary $\partial P$ lying on a geodesic ball we can state a similar inequality using the $w$-weighted volume
\begin{corollary}\label{cor-minimal}
Let $P^m$ be a $m-$dimensional compact manifold with smooth boundary $\partial P$. Let $\varphi:P^m \to N^n$ a minimal immersion into a $n-$dimensional ambient manifold $N$ which possesses a pole $o\in N$ and its radial sectional curvatures $K_N$ at any point $p\in N$ are bounded by above by the radial curvatures $K_w$ of a model space $M_w^n$ 
 $$
 K_N\left(p\right) \leq K_{M_w^n}\left(r\left(p\right)\right)=-\frac{w''}{w}\left(r\left(p\right)\right)\quad.
  $$ 
  Suppose moreover, that $w'\geq c>0$. $\varphi^{-1}(o)\in P$ and $\varphi(\partial P)$ lies in a geodesic sphere centered at the pole $o$. Then 
\begin{equation}
\begin{aligned}
c m^m V_m\, \mu_{w}(P)^{m-1}\leq A(\partial P)^m\quad,
\end{aligned}
\end{equation}
where   $\mu_w(P)$ is the $w$-weighted volume of $P$, $A(\partial P)$ is the Riemannian area of $\partial P$  and $V_m$ is the volume of the unit ball in $\erre^m$. 
\end{corollary}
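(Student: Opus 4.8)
The plan is to combine the generalized Jellett--Minkowski formula (applied with $H\equiv 0$) with a weighted monotonicity formula for the extrinsic balls of $P$. Throughout, write $\rho:=r\circ\varphi$ for the extrinsic distance to the pole, let $R>0$ be the radius of the geodesic sphere $S\subset N$ with $\varphi(\partial P)\subseteq S$, and for $0<t\le R$ set $P_t:=\varphi^{-1}(B^N_t(o))=\{\rho<t\}$ and $V_w(t):=\mu_w(P_t)$. Since $\varphi$ is an immersion of a compact manifold, $\varphi^{-1}(o)$ is finite and contained in the interior of $P$; fix $p_0\in\varphi^{-1}(o)$. Because $\nabla^P\rho$ is the part of $\nabla^N r$ tangent to $P$, we have $|\nabla^P\rho|\le 1$, so $\rho$ is $1$-Lipschitz on $(P,g_P)$ and hence $B^P_t(p_0)\subseteq P_t$ for every $t>0$.

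First I would apply the Main Theorem, in its ``$\le$'' form (valid since $K_N\le K_{M_w^n}$), to $\Omega=P$ with $H\equiv 0$: this gives $m\,\mu_w(P)\le\int_{\partial P}\langle\tau_w,\nu\rangle\,d\sigma_w$. On $\partial P$ one has $\rho\equiv R$, so $|\tau_w|=w(R)/w'(R)$ and $d\sigma_w=w'(R)\,dA$; by Cauchy--Schwarz $\langle\tau_w,\nu\rangle\le w(R)/w'(R)$, and therefore
\[
m\,\mu_w(P)\le w(R)\,A(\partial P).
\]

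Next I would prove the lower bound $\mu_w(P)\ge V_m\,w(R)^m$ by a monotonicity argument. Applying the Main Theorem to $\Omega=P_t$ for a regular value $t\in(0,R)$ of $\rho$ (so $\partial P_t=\{\rho=t\}$ with outward unit normal $\nabla^P\rho/|\nabla^P\rho|$), one has $\langle\tau_w,\nu\rangle=\frac{w(t)}{w'(t)}|\nabla^P\rho|$ and $d\sigma_w=w'(t)\,dA_t$ along $\{\rho=t\}$ (here $dA_t$ is the induced area density), whence $m\,V_w(t)\le w(t)\int_{\{\rho=t\}}|\nabla^P\rho|\,dA_t$. On the other hand, the coarea formula gives $V_w'(t)=w'(t)\int_{\{\rho=t\}}|\nabla^P\rho|^{-1}\,dA_t$, and since $|\nabla^P\rho|\le|\nabla^P\rho|^{-1}$ on each regular level set, these combine to $m\,w'(t)\,V_w(t)\le w(t)\,V_w'(t)$, that is,
\[
\frac{d}{dt}\log\frac{V_w(t)}{w(t)^m}\ge 0\qquad\text{for a.e. }t\in(0,R),
\]
so $t\mapsto V_w(t)/w(t)^m$ is non-decreasing on $(0,R)$. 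Its limit at $0$ is at least $V_m$: indeed $V_w(t)\ge\bigl(\inf_{[0,t]}w'\bigr)\,V(P_t)\ge\bigl(\inf_{[0,t]}w'\bigr)\,V\bigl(B^P_t(p_0)\bigr)$, and using the normalizations $w(0)=0$, $w'(0)=1$ together with the fact that small geodesic balls of $P$ are asymptotically Euclidean one gets $\liminf_{t\to0^+}V_w(t)/w(t)^m\ge V_m$. Hence $\mu_w(P)\ge\mu_w(P_R)=V_w(R)\ge V_m\,w(R)^m$.

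Combining the two inequalities, $w(R)\le\bigl(\mu_w(P)/V_m\bigr)^{1/m}$ yields $A(\partial P)\ge m\,V_m^{1/m}\,\mu_w(P)^{(m-1)/m}$, i.e. $A(\partial P)^m\ge m^m V_m\,\mu_w(P)^{m-1}$; since $w'(0)=1$ forces $c\le 1$, this in particular gives $A(\partial P)^m\ge c\,m^m V_m\,\mu_w(P)^{m-1}$, as claimed (alternatively, the factor $c=\inf w'$ can be carried explicitly through the estimates $V_w(t)\ge c\,V(P_t)$ and the final inequality). The main obstacle is the monotonicity argument: one must restrict to regular values of $\rho$ via Sard's theorem, justify the coarea identity and the evaluation of $\int_{\partial P_t}\langle\tau_w,\nu\rangle\,d\sigma_w$, upgrade the a.e.\ differential inequality to genuine monotonicity of $V_w/w^m$, and---most delicately---control $V_w(t)/w(t)^m$ as $t\to 0$, where the ambient distance $r$ is singular at the pole and $\varphi$ may have multiplicity greater than one there. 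The inclusion $B^P_t(p_0)\subseteq P_t$ and the smoothness of $w$ at $0$ are precisely what make that last estimate go through.
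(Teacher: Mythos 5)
Your proof is correct and follows essentially the same route as the paper's: apply the Jellett--Minkowski inequality with $H\equiv 0$ to $\Omega=P$ (giving $m\mu_w(P)\le w(R)A(\partial P)$ since $\rho\equiv R$ on $\partial P$), establish monotonicity of $t\mapsto\mu_w(P_t)/w(t)^m$ from the same inequality on the extrinsic sublevel sets together with the coarea formula, and pin down the constant via the limit as $t\to 0$. In fact your asymptotic analysis gives $\liminf_{t\to 0}\mu_w(P_t)/w(t)^m\ge V_m$ rather than the paper's cruder $\ge cV_m$, so you actually obtain the slightly sharper bound $A(\partial P)^m\ge m^m V_m\,\mu_w(P)^{m-1}$ and then invoke $c\le w'(0)=1$ to recover the stated form.
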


\begin{remark}
Applying the above corollary but using $w=w_b$ given in remark \ref{propSpaceForm}  we get, as a particular case, the isoperimetric inequality of theorem 4 of \cite{Choe1992}. See also \cite{Palmer} for an other sort of isoperimetric inequalities on minimal submanifolds properly immersed into a Cartan-Hadamard ambient manifold.
\end{remark}

\section{Preliminaries}\label{Prelim}
\subsection{Manifold with a pole  and extrinsic distance function}
We assume throughout the most part of the paper that $\varphi: P^m \longrightarrow N^n$ is an isometric immersion of a complete non-compact Riemannian $m$-manifold $P^m$ into a complete Riemannian manifold $N^n$ with a pole $o\in N$. Recall that a pole
is a point $o$ such that the exponential map
$$\exp_{o}\colon T_{o}N^{n} \to N^{n}$$ is a
diffeomorphism. For every $x \in N^{n}- \{o\}$ we
define $r(x) = r_o(x) = \dist_{N}(o, x)$, and this
distance is realized by the length of a unique
geodesic from $o$ to $x$, which is the {\it
radial geodesic from $o$}. We also denote by $r\vert_P$ or by $r$
the composition $r\circ \varphi: P\to \erre_{+} \cup
\{0\}$. This composition is called the
{\em{extrinsic distance function}} from $o$ in
$P^m$. The gradients of $r$ in $N$ and $r\vert_P$ in  $P$ are
denoted by $\nabla^N r$ and $\nabla^P r$,
respectively. Then we have
the following basic relation, by virtue of the identification, given any point $x\in P$, between the tangent vector fields $X \in T_xP$ and $\varphi_{*_{x}}(X) \in T_{\varphi(x)}N$
\begin{equation}\label{radiality}
\nabla^N r = \nabla^P r +(\nabla^N r)^\bot ,
\end{equation}
where $(\nabla^N r)^\bot(\varphi(x))=\nabla^\bot r(\varphi(x))$ is perpendicular to
$T_{x}P$ for all $x\in P$.

Since the manifold with a pole has a well defined radial vector field, we cab define the radial sectional curvatures.

\begin{definition}
Let $o$ be a point in a Riemannian manifold $N$
and let $x \in N-\{ o \}$. The sectional
curvature $K_{N}(\sigma_{x})$ of the two-plane
$\sigma_{x} \in T_{x}N$ is then called a
\textit{$o$-radial sectional curvature} of $N$ at
$x$ if $\sigma_{x}$ contains the tangent vector
to a minimal geodesic from $o$ to $x$. We denote
these curvatures by $K_{o, N}(\sigma_{x})$.
\end{definition}

\subsection{$w-$model spaces}
The model spaces has two different roles in this paper, the first of them is the role as an ambient manifold and the second one is as a controller of the curvature restrictions. The sectional curvatures of a model space can be explicitly obtained using the warped function $w$.

\begin{proposition}[See \cite{GW,GriExp,Oneill}] \label{propModelRadialCurv}
Let $M_{w}^{m}$ be a $w-$model with center point $o_{w}$. Then the
$o_{w}$-radial sectional curvatures of $M_{w}^{m}$ at every $x \in
\pi^{-1}(r)$ (for $\,r\, >\, 0\,$) are all identical and determined
by the radial function $\,K_{w}(r)\,$ defined as follows:
\begin{equation}
K_{p_{w} , M_{w}}(\sigma_{x}) \, = \, K_{w}(r) \, = \,
-\frac{w''(r)}{w(r)} \quad.
\end{equation}
And the sectional curvatures $K(\Pi_{S^w_r})$ of the $2-$planes $\Pi_{S^w_r}$ tangents to $S_r^w=\pi^{-1}(r)$ are equal to
\begin{equation}
K(\Pi_{S^w_r})=\frac{1-\left(w'\left(r\right)\right)^2}{w(r)}\quad.
\end{equation}
\end{proposition}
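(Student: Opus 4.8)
The plan is to derive both formulas from the standard expressions for the sectional curvature of a warped product, since by definition $M_w^m\setminus\{o_w\}$ is isometric to $B\times_w F$ with $B=\big((0,\Lambda),dr^2\big)$ a one-dimensional (hence flat) interval, $F=\big(S^{m-1}_1,g_{S^{m-1}_1}\big)$ the unit round $(m-1)$-sphere, warping function $w$, and metric $g_{M_w^m}=dr^2+w(r)^2 g_{S^{m-1}_1}$. Because both curvatures in the statement are evaluated at points $x$ with $r(x)=r>0$, it suffices to compute on this warped product. Write $\partial_r$ for the unit field spanning the horizontal ($B$-)distribution; since $\partial_r=\nabla^{M_w^m} r$, a radial $2$-plane at $x\in\pi^{-1}(r)$ is one spanned by $\partial_r$ together with a unit vector $V$ tangent to the fibre, whereas $\Pi_{S^w_r}$ is spanned by two orthonormal fibre vectors.

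For the radial planes I would invoke O'Neill's warped-product curvature identity (see \cite[Ch.~7]{Oneill}): for a unit horizontal vector $U$ and a unit vertical vector $V$ one has $K(U,V)=-\operatorname{Hess}_B w(U,U)/w$. As $B$ is one-dimensional we may take $U=\partial_r$, and then $\operatorname{Hess}_B w(\partial_r,\partial_r)=w''(r)$, so $K(\partial_r,V)=-w''(r)/w(r)$; this is independent of the chosen $V$ and of the point $x$ in the fibre $\pi^{-1}(r)$, which is the first assertion, with $K_w(r)=-w''(r)/w(r)$.

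For the planes tangent to the fibre I would again use O'Neill's formula: for orthonormal vertical vectors $V,W$, $K(V,W)=\big(K_F(\bar V,\bar W)-|\nabla^B w|^2\big)/w^2$, where $\bar V,\bar W$ are the projections to $F$ and $K_F$ is the sectional curvature of $(F,g_F)$. Since $F$ is the unit sphere, $K_F\equiv 1$, and since the base is one-dimensional with $\nabla^B w=w'(r)\,\partial_r$, we get $|\nabla^B w|^2=w'(r)^2$; hence $K(\Pi_{S^w_r})=\big(1-w'(r)^2\big)/w(r)^2$ (in agreement with the identity $\tfrac1{w^2}-(\tfrac{w'}{w})^2$ used elsewhere in the paper). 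Equivalently, one may deduce this last formula from the Gauss equation, noting that each fibre $S^w_r=\pi^{-1}(r)$ is totally umbilic in $M_w^m$ with constant principal curvature $w'(r)/w(r)$ — because $\nabla_V\partial_r=(w'/w)V$ for vertical $V$ — while its induced metric $w(r)^2 g_{S^{m-1}_1}$ has constant intrinsic curvature $1/w(r)^2$.

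There is no genuine obstacle: the argument is a routine specialization of known identities, and the only points needing a little care are that the base is one-dimensional and flat (so $\operatorname{Hess}_B w$ reduces to $w''$ and there are no purely horizontal $2$-planes to consider), that the fibre is the \emph{unit} sphere (so $K_F\equiv1$), and that the whole computation legitimately takes place on $M_w^m\setminus\{o_w\}$, which is all the statement requires.
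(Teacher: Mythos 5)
Your derivation is correct and is exactly the standard computation one would find in O'Neill; note that the paper does not prove this proposition itself but simply cites \cite{GW,GriExp,Oneill}, so there is no internal argument to compare against. The one point worth flagging is that your calculation gives
\[
K(\Pi_{S^w_r})=\frac{1-\bigl(w'(r)\bigr)^2}{w(r)^2},
\]
with $w(r)^2$ in the denominator, whereas the proposition as printed has only $w(r)$. Your answer is the right one: the fibre $\pi^{-1}(r)$ carries the metric $w(r)^2 g_{S^{m-1}_1}$, so its intrinsic curvature is $1/w(r)^2$, and combining this with the umbilicity factor $\bigl(w'(r)/w(r)\bigr)^2$ via Gauss (or directly from O'Neill's vertical-plane formula) yields $\tfrac{1}{w^2}-\bigl(\tfrac{w'}{w}\bigr)^2$. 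The paper itself confirms this in the proof of Theorem~\ref{brendle-theorem}, where the condition $K_{\Pi_{\text{fiber}}}\ge K_{\Pi_{\text{rad}}}$ is translated into $\tfrac{1}{w^2}-\bigl(\tfrac{w'}{w}\bigr)^2\ge -\tfrac{w''}{w}$, so the statement of the proposition contains a typographical error that you have, in effect, corrected. Both the O'Neill-formula route and the totally-umbilic/Gauss-equation route you sketch are legitimate and give the same answer; no gap.
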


We can also explicitly calculate the mean curvature of the geodesic spheres. 

\begin{proposition}[See \cite{Oneill} p. 206]\label{propWarpMean}
Let $M_{w}^{n}$ be a $w-$model with warping function $w(r)$ and center $o_{w}$.
The distance sphere of radius $r$ and center
$o_{w}$ in $M_{w}^{n}$, denoted as $S^w_r$, is the fiber
$\pi^{-1}(r)$. This distance sphere has the following constant mean curvature
vector in $M_{w}^{n}$
\begin{equation}
H_{{\pi^{-1}}(r)} = -n\,\eta_{w}(r)\,\nabla^{M}\pi =  -n\,\eta_{w}(r)\,\nabla^{M}r
\quad ,
\end{equation}
where the mean curvature function $\eta_{w}(r)$ is defined by
\begin{equation}\label{eqWarpMean}
\eta_{w}(r)  = \frac{w'(r)}{w(r)} = \frac{d}{dr}\ln(w(r))\quad .
\end{equation}
\end{proposition}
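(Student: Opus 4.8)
The plan is to compute the second fundamental form of the fiber $S^w_r=\pi^{-1}(r)$ directly from the warped-product structure and then take a trace. Since $r=\pi$ is constant along $S^w_r$, the radial field $\nabla^{M}r$ is a unit normal to $S^w_r$, and the tangent space of $S^w_r$ at each point is $(\nabla^{M}r)^{\perp}$. In the paper's convention $B^{P}(X,Y)=\nabla^{N}_XY-\nabla^{P}_XY$, so for $X,Y$ tangent to $S^w_r$ the second fundamental form of $S^w_r$ in $M^n_w$ is just the normal component of $\nabla^{M}_XY$; differentiating $\langle Y,\nabla^{M}r\rangle\equiv 0$ in the direction $X$ gives $\langle\nabla^{M}_XY,\nabla^{M}r\rangle=-\Hess^{M}r(X,Y)$, hence
\[
B^{S^w_r}(X,Y)=-\Hess^{M}r(X,Y)\,\nabla^{M}r .
\]
Thus the statement is reduced to evaluating $\Hess^{M}r$ on $(\nabla^{M}r)^{\perp}$ and then summing over a frame.

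The Hessian of $r$ I would read off from the metric $g_{M^n_w}=dr^{2}+w(r)^{2}\,d\Theta^{2}$. For the angular coordinate fields $\partial_i,\partial_j$ one has $\Gamma^{r}_{ij}=-\tfrac12\,\partial_r g_{ij}=-w(r)w'(r)\,(d\Theta^{2})_{ij}$, so $\Hess^{M}r(\partial_i,\partial_j)=-\Gamma^{r}_{ij}=\tfrac{w'(r)}{w(r)}\,g_{ij}=\eta_w(r)\,g_{ij}$, while $\Hess^{M}r(\partial_r,\cdot)=0$; that is, $\Hess^{M}r=\eta_w(r)\,(g_{M^n_w}-dr\otimes dr)$ on $M^n_w\setminus\{o_w\}$. (Equivalently one may quote O'Neill's connection formulas for a warped product, which already say that each fiber $\{r\}\times S^{n-1}_1$ is totally umbilic with second fundamental form $-\langle\cdot,\cdot\rangle\,\nabla^{M}(\ln w)$.) Combining with the first paragraph, $B^{S^w_r}(X,Y)=-\eta_w(r)\,\langle X,Y\rangle\,\nabla^{M}r$: the geodesic sphere is totally umbilic, with every principal curvature equal to $\eta_w(r)=\tfrac{d}{dr}\ln w(r)$.

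It then remains to trace over a $g_{M^n_w}$-orthonormal frame $\{E_a\}_{a=1}^{n-1}$ of the $(n-1)$-dimensional fiber:
\[
H_{\pi^{-1}(r)}=\tr_{g}B^{S^w_r}=\sum_{a=1}^{n-1}B^{S^w_r}(E_a,E_a)=-(n-1)\,\eta_w(r)\,\nabla^{M}r=-(n-1)\,\eta_w(r)\,\nabla^{M}\pi ,
\]
so the scalar factor is the fiber dimension and the sign records that the mean curvature vector of a geodesic sphere points toward the center $o_w$, i.e. opposite to $\nabla^{M}r=\nabla^{M}\pi$. I do not expect a real obstacle: once the metric is written down this is essentially a one-line computation, and the only places demanding care are the sign conventions (using the paper's $B^{P}=\nabla^{N}-\nabla^{P}$, and taking $\nabla^{M}r$ rather than $-\nabla^{M}r$ as the unit normal) and the bookkeeping of the factor $w(r)$ when passing from the standard frame of $S^{n-1}_1$ to a $g_{M^n_w}$-orthonormal one. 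As a consistency check, on the closed hypersurface $S^w_r$ one has $\nabla^{P}r\equiv 0$, hence $\tau_w=\tfrac{w}{w'}\nabla^{M}r$ is normal and $\langle\tau_w,H\rangle=-(n-1)$ there; inserting this into the equality form of the generalized Minkowski formula, $(n-1)\,\mu_w(S^w_r)+\int_{S^w_r}\langle\tau_w,H\rangle\,d\mu_w=0$, yields an identity, which pins down the coefficient.
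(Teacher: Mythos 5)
Your derivation is sound and is essentially the standard computation that O'Neill's reference also gives: identify the unit normal as $\nabla^{M}r$, compute the second fundamental form of the fiber from $\Hess^{M}r$ (or, equivalently, quote the warped-product connection formulas to conclude the fiber is totally umbilic with shape operator $\eta_w(r)\,\mathrm{id}$), and trace. The paper supplies no proof of its own --- it simply cites O'Neill p.~206 --- so there is no divergence in method; you have filled in a derivation the paper leaves implicit.

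One point worth flagging, though. Your trace runs over the $(n-1)$-dimensional fiber and correctly produces
\begin{equation*}
H_{\pi^{-1}(r)}=-(n-1)\,\eta_w(r)\,\nabla^{M}r ,
\end{equation*}
whereas the proposition as printed states the coefficient $n$. Given the paper's convention $H=\tr_g B$ (no division by the dimension), and since $S^w_r$ is $(n-1)$-dimensional, the printed $n$ is a typographical slip: the correct factor is the fiber dimension $n-1$. Your consistency check confirms this: with $\tau_w=\tfrac{w}{w'}\nabla^{M}r$ normal to $S^w_r$, you get $\langle\tau_w,H\rangle=-(n-1)$, so the equality form of the generalized Jellett--Minkowski formula applied to the closed hypersurface $\Sigma=S^w_r$ reads $(n-1)\sigma_w(\Sigma)=-\int_\Sigma\langle\tau_w,H\rangle\,d\sigma_w$, which is exactly the second line of the paper's display \eqref{equ-dos}. (Had the coefficient really been $n$, that identity would fail.) The slip is harmless downstream because the paper only invokes the proposition for the sign of $H$ --- to argue that $H$ points toward $o_w$ when $\eta_w>0$ --- not for its magnitude; still, you were right to trust your own computation over the printed constant.
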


In particular we have for the constant curvature space forms
$\mathbb{K}^{m}(b)$:
\begin{equation}\label{eqHbversusWarp}
\eta_{w_{b}}(r) = \begin{cases} \sqrt{b}\cot(\sqrt{b}\,r) &\text{if $b>0$}\\
\phantom{\sqrt{b}} 1/r &\text{if $b=0$}\\\sqrt{-b}\coth(\sqrt{-b}\,r) &\text{if
$b<0$} \quad .
\end{cases}
\end{equation}

The area of the geodesic sphere $S^w_R(o_w)$ of radius $R$ centered at $o_w$ is completely determined via $w$ by the volume of the fiber
\begin{equation}\label{volume-sphere}
A(S^w_R(o_w))=\omega_{n-1}w^{n-1}(R),
\end{equation}
And the volume of the corresponding ball $B_R^w(o_w)$, for which the fiber is the boundary
\begin{equation}\label{volume-ball}
V(B^w_R(o_w))=\omega_{n-1}\int_0^R w^{n-1}(t)dt,
\end{equation}
being $\omega_{n-1}$ in (\ref{volume-sphere}) and (\ref{volume-ball}) the volume of the standard sphere $S^{n-1}_1$.
\subsection{Hessian and Laplacian comparison}
The 2.nd order analysis of the restricted distance function $r_{|_{P}}$ defined on manifolds with a pole is  governed by the Hessian comparison (see \cite[Theorem A]{GW}). 

The Hessian of a restricted function in a submanifold and the Hessian of the function in the ambient space are related by the following proposition

\begin{proposition}\label{Hes-comp-sub}
Given an isometric immersion $\varphi:P^m\to N^n$, and given a smooth function $f:N\to \erre$, then:
\begin{equation}
\Hess^P(f\circ \varphi)(X,Y)=\Hess^Nf(X,Y)+\langle B^P(X,Y),\nabla^Nf\rangle.
\end{equation}
\end{proposition}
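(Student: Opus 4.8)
The plan is to compute $\Hess^P(f\circ\varphi)$ straight from the definition $\Hess^P g(X,Y)=X(Yg)-(\nabla^P_XY)g$ applied to $g=f\circ\varphi$, and to reduce every derivative of $f\circ\varphi$ to objects living on $N$ by means of the Gauss formula $\nabla^N_XY=\nabla^P_XY+B^P(X,Y)$. Since both sides of the claimed identity are manifestly tensorial (symmetric and bilinear in $X,Y$), it suffices to verify it for vector fields on $P$ extending a given pair of tangent vectors at a fixed point $x\in P$.

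First I would use the identification of $X\in T_xP$ with $\varphi_{*_x}(X)\in T_{\varphi(x)}N$, so that $X(f\circ\varphi)=df(\varphi_* X)=\langle\nabla^N f,X\rangle$ along $\varphi$. Differentiating once more and using that $\nabla^N$ is a metric connection,
\[
X\big(Y(f\circ\varphi)\big)=X\langle\nabla^N f,Y\rangle=\langle\nabla^N_X\nabla^N f,Y\rangle+\langle\nabla^N f,\nabla^N_XY\rangle,
\]
where the first term on the right is precisely $\Hess^N f(X,Y)$.

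Next I would split $\nabla^N_XY$ via the Gauss formula into its tangential part $\nabla^P_XY$ and its normal part $B^P(X,Y)$, obtaining
\[
X\big(Y(f\circ\varphi)\big)=\Hess^N f(X,Y)+\langle\nabla^N f,\nabla^P_XY\rangle+\langle\nabla^N f,B^P(X,Y)\rangle.
\]
On the other hand, since $\nabla^P_XY$ is tangent to $P$, one has $(\nabla^P_XY)(f\circ\varphi)=\langle\nabla^N f,\nabla^P_XY\rangle$. Subtracting this from the previous display makes the tangential term cancel and leaves
\[
\Hess^P(f\circ\varphi)(X,Y)=\Hess^N f(X,Y)+\langle B^P(X,Y),\nabla^N f\rangle,
\]
which is the assertion.

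The only point requiring a little care — and the closest thing to an obstacle here — is the bookkeeping of the identification $\varphi_*$: in the formula $X(f\circ\varphi)=\langle\nabla^N f,X\rangle$ only the component of $\nabla^N f$ tangent to $P$ contributes when $X$ is tangent, and one must make sure this is consistently tracked. This is handled automatically, because the Gauss-formula split produces the genuinely tangent vector $\nabla^P_XY$, against which $\nabla^N f$ is paired, and this is exactly the term that cancels with $(\nabla^P_XY)(f\circ\varphi)$; the surviving pairing $\langle\nabla^N f,B^P(X,Y)\rangle$ is unambiguous since $B^P(X,Y)$ is normal. (Equivalently, one may decompose $\nabla^N f=\nabla^P(f\circ\varphi)+(\nabla^N f)^{\perp}$ at the outset, as in \eqref{radiality}, and carry the normal part through; the bookkeeping is the same.)
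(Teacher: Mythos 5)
The paper states Proposition~\ref{Hes-comp-sub} without proof, quoting it as a standard fact of submanifold geometry, so there is no in-text argument to compare against. Your computation is correct and is the standard derivation: starting from $\Hess^P(f\circ\varphi)(X,Y)=X(Y(f\circ\varphi))-(\nabla^P_XY)(f\circ\varphi)$, using metric compatibility of $\nabla^N$ and the Gauss formula $\nabla^N_XY=\nabla^P_XY+B^P(X,Y)$, and noting that the tangential term $\langle\nabla^N f,\nabla^P_XY\rangle$ cancels exactly against $(\nabla^P_XY)(f\circ\varphi)$. Your remark about the identification $\varphi_*$ and the split $\nabla^N f=\nabla^P(f\circ\varphi)+(\nabla^N f)^{\perp}$ correctly flags the one place where bookkeeping matters, and resolves it properly. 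No gaps.
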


In the case of radial functions of a model space 
 
\begin{proposition}\label{hess-mod}let $M_{w}^{m}$ denote a $w-$model
with center $o_{w}$. Let $r:M_w^n\to \erre^+$ denote the distance function to the center  $o_w$. Then for any smooth function $F:\erre\to\erre$,
\begin{equation}
\begin{aligned}
\Hess^{M_w^n}F\circ r(X,Y)=&\left(F''\circ r-\left( F'\circ r\right)\left(\eta_w\circ r\right)\right)\langle X,\nabla r\rangle\langle Y,\nabla r\rangle\\&+\left( F'\circ r\right)\left(\eta_w\circ r\right)\left(\langle X,Y\rangle\right).
\end{aligned}
\end{equation}
\end{proposition}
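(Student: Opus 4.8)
The plan is to split the computation into two independent pieces: the general second‑order chain rule for $\Hess^{M_w^n}(F\circ r)$, valid on any Riemannian manifold, and an explicit formula for $\Hess^{M_w^n}r$ that uses the warped product structure $g_{M_w^n}=dr^2+w(r)^2g_{S^{n-1}_1}$.

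First I would record the chain rule. Since $d(F\circ r)=(F'\circ r)\,dr$, differentiating once more through $\Hess^{M_w^n}\!g\,(X,Y)=X(Yg)-(\nabla^{M_w^n}_XY)(g)$ gives, on $M_w^n\setminus\{o_w\}$,
\[
\Hess^{M_w^n}(F\circ r)(X,Y)=(F''\circ r)\,\langle X,\nabla r\rangle\langle Y,\nabla r\rangle+(F'\circ r)\,\Hess^{M_w^n}r(X,Y),
\]
using only $Xr=\langle X,\nabla r\rangle$. So everything reduces to computing $\Hess^{M_w^n}r$.

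Next I would compute $\Hess^{M_w^n}r$ from O'Neill's connection formulas for warped products (see \cite{Oneill}, and \cite{GW}). With $\partial_r=\nabla^{M_w^n}r$ the lift of the base field and $V,W$ lifts of fields tangent to the fibers $S^{n-1}_1$, these give $\nabla^{M_w^n}_{\partial_r}\partial_r=0$, $\nabla^{M_w^n}_{\partial_r}V=(w'/w)\,V$, and $\nabla^{M_w^n}_VW=\nabla^{S}_VW-(w'/w)\,\langle V,W\rangle\,\partial_r$. Substituting these into $\Hess^{M_w^n}r(X,Y)=X(Yr)-(\nabla^{M_w^n}_XY)(r)$ and using that $Vr=0$ while $\partial_r r\equiv1$ and $\langle\nabla^{S}_VW,\nabla r\rangle=0$, I obtain
\[
\Hess^{M_w^n}r(\partial_r,\partial_r)=0,\qquad \Hess^{M_w^n}r(\partial_r,V)=0,\qquad \Hess^{M_w^n}r(V,W)=\eta_w(r)\,\langle V,W\rangle,
\]
that is, $\Hess^{M_w^n}r=\eta_w(r)\big(g_{M_w^n}-dr\otimes dr\big)$. (Equivalently, this records that the distance spheres $S^w_r$ are totally umbilic with shape operator $\eta_w(r)\,\mathrm{Id}$ — Proposition \ref{propWarpMean} giving only its trace — together with $\Hess^{M_w^n}r(\nabla r,\cdot)=0$; one could also route this through Proposition \ref{Hes-comp-sub} applied to the inclusion of $S^w_r$.)

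Finally I would decompose $X=\langle X,\nabla r\rangle\,\nabla r+X^{\top}$ and $Y=\langle Y,\nabla r\rangle\,\nabla r+Y^{\top}$ with $X^{\top},Y^{\top}$ tangent to the fiber, so that $\Hess^{M_w^n}r(X,Y)=\eta_w(r)\langle X^{\top},Y^{\top}\rangle=\eta_w(r)\big(\langle X,Y\rangle-\langle X,\nabla r\rangle\langle Y,\nabla r\rangle\big)$; inserting this into the chain‑rule identity and collecting the coefficient of $\langle X,\nabla r\rangle\langle Y,\nabla r\rangle$ produces exactly the asserted formula. The one genuinely non‑routine step is the computation of $\Hess^{M_w^n}r$ on the warped product; once the Levi‑Civita connection of $g_{M_w^n}$ is in hand, the remainder is bookkeeping.
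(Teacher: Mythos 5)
Your proof is correct, and it fills in a computation the paper leaves implicit: Proposition~\ref{hess-mod} is stated without proof as a standard warped-product fact. Your route — the chain rule $\Hess(F\circ r)=F''\,dr\otimes dr+F'\,\Hess r$ together with $\Hess^{M_w^n} r=\eta_w(r)\bigl(g_{M_w^n}-dr\otimes dr\bigr)$ obtained from the O'Neill connection formulas — is exactly the standard derivation, and it is consistent with the model-space case of Theorem~\ref{thmGreW}, where $\Hess^{M_w^m} r(Y,Y)=\eta_w(r)\bigl(1-\langle\nabla r,Y\rangle^2\bigr)$ for unit $Y$. Nothing is missing; the decomposition into radial and fiber components and the bookkeeping at the end are all as they should be.
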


Now, we can state a  comparison theorem   when one of the spaces is a model space $M^m_w$ using \cite{GW}:

    \begin{theorem}[See \cite{GW}, Theorem A]\label{thmGreW}
Let $N^{n}$ be a manifold with a pole $p$, let $M_{w}^{m}$ denote a $w-$model
with center $p_{w}$. Suppose that $m \leq n$ and that
every $p$-radial sectional curvature at $x \in
N - \{p\}$ is bounded from above (or below) by the $p_{w}$-radial sectional curvatures in
$M_{w}^{m}$ as follows:
\begin{equation}\label{eqKbound}
K_{p, N}(\sigma_{x}) \leq -\frac{w''(r)}{w(r)}\quad \left(\text{respectively }K_{p, N}(\sigma_{x}) \geq -\frac{w''(r)}{w(r)}\right)
\end{equation}
for every radial two-plane $\sigma_{x} \in T_{x}N$ at distance $r = r(x) =
\dist_{N}(p, x)$ from $p$ in $N$. Then the Hessian of the distance function in
$N$ satisfies
\begin{equation}\label{eqHess}
\begin{aligned}
\Hess^{N}(r(x))(X, X) &\geq (\leq)\Hess^{M_{w}^{m}}(r(y))(Y, Y)\\ &=
\eta_{w}(r)\left(1 - \langle \nabla^{M}r(y), Y \rangle_{M}^{2} \right) \\ &=
\eta_{w}(r)\left(1 - \langle \nabla^{N}r(x), X \rangle_{N}^{2} \right)
\end{aligned}
\end{equation}
for every unit vector $X$ in $T_{x}N$ and for every unit vector $Y$ in $T_{y}M$
with $\,r(y) = r(x) = r\,$ and $\, \langle \nabla^{M}r(y), Y \rangle_{M} =
\langle \nabla^{N}r(x), X \rangle_{N}\,$.
\end{theorem}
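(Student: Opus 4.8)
The plan is to reduce the statement to the classical one–dimensional Jacobi–field comparison along the radial geodesics issuing from the pole, which is exactly the mechanism of \cite[Theorem A]{GW}. Fix $x\in N-\{p\}$, set $r=r(x)$, and let $\gamma\colon[0,r]\to N$ be the unit-speed minimal geodesic with $\gamma(0)=p$ and $\gamma(r)=x$, so $\dot\gamma(r)=\nabla^N r(x)$. Since $\abs{\nabla^N r}\equiv 1$ and the flow lines of $\nabla^N r$ are precisely these geodesics, $\nabla^N_{\nabla^N r}\nabla^N r=0$, hence $\Hess^N r(\nabla^N r,\cdot)=0$. Decomposing a unit vector $X\in T_xN$ as $X=\langle X,\nabla^N r\rangle\,\nabla^N r+X^{\perp}$ with $X^{\perp}\perp\nabla^N r$, this gives at once
\[
\Hess^N r(X,X)=\Hess^N r(X^{\perp},X^{\perp}),\qquad 1-\langle\nabla^N r(x),X\rangle^{2}=\abs{X^{\perp}}^{2},
\]
so it suffices to bound $\Hess^N r(X^{\perp},X^{\perp})$ from below (resp.\ above) by $\eta_w(r)\abs{X^{\perp}}^{2}$: by Proposition \ref{hess-mod} with $F=\operatorname{id}$ the model quantity is $\Hess^{M_w^m}r(Y,Y)=\eta_w(r)\bigl(1-\langle\nabla^M r(y),Y\rangle^{2}\bigr)=\eta_w(r)\abs{X^{\perp}}^{2}$ under the matching conditions on the radial components, and the dimension hypothesis $m\le n$ serves only to guarantee the existence of such a unit $Y\in T_yM_w^m$.

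Next I would identify $\Hess^N r(X^{\perp},X^{\perp})$ with the second fundamental form of the geodesic sphere $\partial B^N_r(p)$ at $x$, equivalently with $\langle J'(r),J(r)\rangle$, where $J$ is the Jacobi field along $\gamma$ with $J(0)=0$ and $J(r)=X^{\perp}$ (obtained by varying $\gamma$ through radial geodesics), and observe that $J$ does not vanish on $(0,r]$ precisely because $p$ is a pole, so $\gamma$ carries no conjugate points. Then the comparison becomes a Sturm-type statement for $f(t)=\abs{J(t)}$: from $\langle J''+R^N(J,\dot\gamma)\dot\gamma,\,J\rangle=0$, the Cauchy--Schwarz inequality $\abs{J'}^{2}\abs{J}^{2}\ge\langle J',J\rangle^{2}$, and the curvature hypothesis $\langle R^N(v,\dot\gamma)\dot\gamma,v\rangle\le-\tfrac{w''}{w}\abs{v}^{2}$ (resp.\ $\ge$), one gets $f''-\tfrac{w''}{w}f\ge 0$ (resp.\ $\le 0$) on $(0,r]$, while the model comparison function $\bar f(t)=\tfrac{w(t)}{w(r)}\abs{X^{\perp}}$ satisfies $\bar f''-\tfrac{w''}{w}\bar f=0$ with $\bar f(0)=0$ and $\bar f(r)=\abs{X^{\perp}}=f(r)$. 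Differentiating the Wronskian $h=f\bar f'-f'\bar f$ gives $h'=\bar f\bigl(\tfrac{w''}{w}f-f''\bigr)\le 0$ (resp.\ $\ge 0$), and $h\to 0$ as $t\to 0^{+}$ since $f,\bar f$ vanish there while $f',\bar f'$ stay bounded; hence $h(r)\le 0$ (resp.\ $\ge 0$), which, as $f(r)=\bar f(r)$, yields $f'(r)\ge\bar f'(r)$ (resp.\ $\le$). Therefore $\Hess^N r(X^{\perp},X^{\perp})=\langle J'(r),J(r)\rangle=f(r)f'(r)\ge f(r)\bar f'(r)=\eta_w(r)\abs{X^{\perp}}^{2}$ (resp.\ $\le$), which together with the first paragraph is the claim. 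For the ``curvature below'' direction there is the equally standard alternative of using that, having no conjugate points, $J$ minimizes the index form $I_0^r(W,W)=\int_0^r\bigl(\abs{W'}^{2}-\langle R^N(W,\dot\gamma)\dot\gamma,W\rangle\bigr)\,dt$ among fields with the same endpoint data, and inserting $\widetilde J(t)=\tfrac{w(t)}{w(r)}\abs{X^{\perp}}\,P(t)$ ($P$ = parallel extension of $X^{\perp}/\abs{X^{\perp}}$) as a competitor, the curvature bound turning $I_0^r(\widetilde J,\widetilde J)$ into exactly $\eta_w(r)\abs{X^{\perp}}^{2}$.

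The point needing real care is the passage through the pole $t=0$, where $f$ and $\bar f$ both degenerate and the shape operator of $\partial B^N_t(p)$ blows up like $t^{-1}$: one cannot initialize the comparison at $t=0$, so everything must be phrased through the Wronskian $h$ (whose limit $h(0^{+})=0$ is robust) or through an auxiliary comparison on $[\varepsilon,r]$ with $\varepsilon\downarrow 0$. Besides this, the items to verify are the variation-field computation identifying $\Hess^N r(X^{\perp},X^{\perp})$ with $\langle J'(r),J(r)\rangle$, the non-vanishing of $J$ on $(0,r]$ (which is where the pole hypothesis is genuinely used), and keeping the two curvature regimes and the corresponding inequality directions consistent. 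All of this being classical, in the write-up I would simply cite \cite[Theorem A]{GW} and indicate this line of argument.
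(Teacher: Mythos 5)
The paper does not prove this result at all: it is quoted as a preliminary directly from Greene--Wu \cite[Theorem A]{GW}, so there is no in-paper argument to check you against. Your outline is the classical Jacobi-field proof behind that citation, and most of it is right: the reduction $\Hess^N r(X,X)=\Hess^N r(X^\perp,X^\perp)$ with $1-\langle\nabla^N r,X\rangle^2=|X^\perp|^2$, the identification of $\Hess^N r(X^\perp,X^\perp)$ with $\langle J'(r),J(r)\rangle$ for the Jacobi field $J$ vanishing at the pole with $J(r)=X^\perp$, the observation that $J\neq 0$ on $(0,r]$ because the pole excludes conjugate points, and the Wronskian comparison of $f=|J|$ against $\bar f=\tfrac{w(t)}{w(r)}|X^\perp|$ with the limit $h(0^+)=0$ replacing an (impossible) initialization at $t=0$, are all correct and are exactly how one avoids the $t^{-1}$ singularity of the shape operator at the pole.

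The one genuine gap is that the Sturm-type inequality $f''-\tfrac{w''}{w}f\le 0$ you assert for the ``curvature bounded below'' case does not follow from the displayed identity. One always has $ff''=|J'|^2-(f')^2-\langle R^N(J,\dot\gamma)\dot\gamma,J\rangle$, and Cauchy--Schwarz gives $|J'|^2-(f')^2\ge 0$: this nonnegative term helps when you want $f''\ge(w''/w)f$, but it points the wrong way when you want $f''\le(w''/w)f$. So the Wronskian route proves only the direction ``upper radial curvature bound $\Rightarrow$ lower Hessian bound''. For the reverse direction the index-form argument you mention is not an ``equally standard alternative'' but the necessary tool: since there are no conjugate points, $J$ minimizes $I_0^r$ among fields with the same endpoint data, and comparing with the competitor $\widetilde J(t)=\tfrac{w(t)}{w(r)}|X^\perp|\,P(t)$ plus one integration by parts gives $\Hess^N r(X^\perp,X^\perp)=I_0^r(J,J)\le I_0^r(\widetilde J,\widetilde J)\le\eta_w(r)|X^\perp|^2$ (note the last step is a ``$\le$'', not the ``exactly'' you wrote, unless $N$ is itself the model). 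A matrix Riccati comparison is the other standard way to get this direction and is closer to the Greene--Wu exposition. With that correction your sketch is sound.
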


Hence, from proposition \ref{Hes-comp-sub}, proposition \ref{hess-mod}, and theorem \ref{thmGreW} after few calculations one obtains

\begin{corollary} \label{corLapComp}
Given an isometric immersion $\varphi:P^m\to N^n$. Suppose again that the assumptions of Theorem \ref{thmGreW} are
satisfied. Then,  for every smooth function $f(r)$ with $f'(r)
\geq 0\,\,\textrm{for all}\,\,\, r$ :
\begin{equation} \label{eqLap1}
\begin{aligned}
\Delta^{P}(f \circ r) \, \geq (\leq) \, &\left(\,  f''(r) - f'(r)\eta_{w}(r) \, \right)
 \Vert \nabla^{P} r \Vert^{2} \\ &+ mf'(r) \left(\, \eta_{w}(r) +\frac{1}{m}
\langle \, \nabla^{N}r, \, H_{P}  \, \rangle  \,  \right)  \quad ,
\end{aligned}
\end{equation}
where $H_{P}$ denotes the mean curvature vector of $P$ in $N$.
\end{corollary}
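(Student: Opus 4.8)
The plan is to write the Laplacian as the trace of the Hessian over an orthonormal frame of $P$, pass from the intrinsic Hessian on $P$ to the ambient Hessian on $N$ by Proposition \ref{Hes-comp-sub}, strip off the radial dependence with the chain rule, and close with the Hessian comparison of Theorem \ref{thmGreW}.

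First I would fix $x\in P$ and an orthonormal basis $\{e_i\}_{i=1}^{m}$ of $T_xP$, viewed through $\varphi_*$ as unit vectors in $T_{\varphi(x)}N$, and write $f\circ r$ both for the ambient function on $N$ and for its restriction to $P$. Then $\Delta^{P}(f\circ r)=\sum_{i=1}^{m}\Hess^{P}(f\circ r)(e_i,e_i)$, and Proposition \ref{Hes-comp-sub} rewrites each summand as $\Hess^{N}(f\circ r)(e_i,e_i)+\langle B^{P}(e_i,e_i),\nabla^{N}(f\circ r)\rangle$. Since $\nabla^{N}(f\circ r)=f'(r)\,\nabla^{N}r$ and $\sum_{i}B^{P}(e_i,e_i)=H_{P}$, summing yields
\[
\Delta^{P}(f\circ r)=\sum_{i=1}^{m}\Hess^{N}(f\circ r)(e_i,e_i)+f'(r)\langle H_{P},\nabla^{N}r\rangle .
\]

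Next, because $r$ has unit gradient in $N$, the chain rule gives $\Hess^{N}(f\circ r)(X,X)=f''(r)\langle\nabla^{N}r,X\rangle^{2}+f'(r)\Hess^{N}r(X,X)$ for every unit $X\in T_{\varphi(x)}N$. Here the hypothesis $f'(r)\geq 0$ is what makes the argument work: multiplying the comparison (\ref{eqHess}) of Theorem \ref{thmGreW} by $f'(r)$ preserves the inequality, so
\[
\Hess^{N}(f\circ r)(X,X)\ \geq(\leq)\ f''(r)\langle\nabla^{N}r,X\rangle^{2}+f'(r)\eta_{w}(r)\bigl(1-\langle\nabla^{N}r,X\rangle^{2}\bigr).
\]
Applying this with $X=e_i$, summing over $i$, and using that $\sum_{i}\langle\nabla^{N}r,e_i\rangle^{2}=\Vert\nabla^{P}r\Vert^{2}$ by the orthogonal decomposition (\ref{radiality}) while $\sum_{i}1=m$, I obtain $\sum_{i}\Hess^{N}(f\circ r)(e_i,e_i)\ \geq(\leq)\ f''(r)\Vert\nabla^{P}r\Vert^{2}+f'(r)\eta_{w}(r)\bigl(m-\Vert\nabla^{P}r\Vert^{2}\bigr)$. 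Substituting into the displayed identity and collecting the coefficient of $\Vert\nabla^{P}r\Vert^{2}$ produces exactly (\ref{eqLap1}).

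I do not expect a genuine obstacle here; the argument is essentially bookkeeping. The two points that need care are: (i) invoking $f'(r)\geq 0$ at the right moment, so that the two cases of Theorem \ref{thmGreW} (radial curvature bounded from above, respectively from below) propagate to the correct direction of the inequality in (\ref{eqLap1}); and (ii) the fact that the normal component $(\nabla^{N}r)^{\perp}$ never appears explicitly, since it has already been absorbed into the term $\langle B^{P}(X,Y),\nabla^{N}f\rangle$ of Proposition \ref{Hes-comp-sub}, leaving only its tangential part $\nabla^{P}r$ to show up through $\sum_{i}\langle\nabla^{N}r,e_i\rangle^{2}=\Vert\nabla^{P}r\Vert^{2}$.
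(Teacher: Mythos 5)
Your proof is correct and carries out exactly the computation the paper leaves implicit (the paper merely says ``after few calculations one obtains'' citing Propositions \ref{Hes-comp-sub}, \ref{hess-mod}, and Theorem \ref{thmGreW}). Your chain-rule decomposition of $\Hess^{N}(f\circ r)$ is the same content as Proposition \ref{hess-mod} specialized to $F=f$, so the ingredients and the order in which you invoke $f'\geq 0$ to preserve the comparison inequality match the intended argument.
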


\section{Proof of the Jellett-Minkowski's generalized formula (main theorem)}\label{Proof-Main}
In order to prove the Jellett-Minkowski's generalized formula we only have to apply the divergence theorem to an appropriate function. Let us define the following function $F:\erre^{+}\to\erre^{+}$ given by  
\begin{equation}\label{order-function}
F(t):=\int_0^t w(s)ds\quad.
\end{equation}
Using corollary \ref{corLapComp}
\begin{equation}\label{laplacian-order-function}
\begin{aligned}
\Delta^PF&\geq (\leq) mw(r)\left(\eta_w(r)+\frac{1}{m}\langle \nabla^Nr,H\rangle\right)\\
=& \left(m+\langle \tau_w, H\rangle\right)w'(r)\quad.
\end{aligned}
\end{equation}
Applying the divergence theorem to the domain $\Omega \subset P$
\begin{equation}
\begin{aligned}
\int_{\partial \Omega}\langle \nabla^P F, \nu\rangle dA\geq (\leq) m \mu_w(\Omega)+\int_{\Omega}\langle \tau_w, H\rangle d\mu_w\quad.
\end{aligned}
\end{equation}
Therefore
\begin{equation}
\begin{aligned}
\int_{\partial \Omega}\langle \tau_w, \nu \rangle d\sigma_w\geq (\leq) m \mu_w(\Omega)+\int_{\Omega}\langle \tau_w, H\rangle d\mu_w\quad.
\end{aligned}
\end{equation}
And the theorem follows.
\section{Proof of corollary \ref{radius-isope-model} and corollary \ref{cmc-model}}\label{sec:5}
Since $\Sigma=\partial \Omega$ ($\partial \Sigma=\emptyset$), and $\Omega$ is totally geodesic submanifold  by the generalized Jellett-Minkowsi formula
\begin{equation}\label{equ-dos}
\begin{aligned}
n\mu_w(\Omega) &=\int_\Sigma \langle \tau_w, \nu\rangle d\sigma_w \quad,\\
(n-1)\sigma_w(\Sigma)&=-\int_\Sigma \langle \tau_w,H\rangle d\sigma_w\quad. 
\end{aligned}
\end{equation}
From the first line of the above equations 
\begin{equation}
n\mu_w(\Omega)\leq \max_{\Sigma}\vert \tau_w\vert \int_\Sigma d\sigma_w=\max_{\Sigma}\vert \tau_w\vert \sigma_w(\Sigma)\quad,
\end{equation}
And the corollary \ref{radius-isope-model}  follows. On the other hand,  by the second equality of (\ref{equ-dos}) if $\Sigma$ is a constant mean curvature hypersurface
\begin{equation}
\begin{aligned}
(n-1)\sigma_w(\Sigma)&=h\int_\Sigma \langle \tau_w,\nu\rangle d\sigma_w=h\,n\,\mu_w(\Omega)\quad. 
\end{aligned}
\end{equation}
And corollary \ref{cmc-model} is proven.

\section{Proof of the theorem \ref{Alexandrov}}\label{sec:6}
This proof follows the proof done in \cite{Brendle}. First of all, we need the following Heintze-Karcher \cite{Hein-Kar} inequality 
\begin{theorem}[Heintze-Karcher  inequality]\label{Hein-Kar}
Let  $M_w^n$ be a $w-$model space with center $o_w$, positive warping function $w$ and positive derivative $w'$ on $(0,\Lambda)$ being $\Lambda$ the radius of the model space.  Suppose moreover that the warping function satisfies the following inequalities:
\begin{equation}
\frac{1}{w^2}-\left(\frac{w'}{w}\right)^2\geq -\frac{w''}{w}\geq-\frac{w'''}{w'} \quad,
\end{equation}
then in every closed, embedded, orientable and  convex mean curvature hypersurface $\Sigma$ bounding a domain $\Omega$ the following inequality holds
\begin{equation}\label{ine-convex}
\left(n-1\right)\int_\Sigma \frac{1}{h}d\sigma_w\geq n\mu_w(\Omega)\quad.
\end{equation}
With equality in \ref{ine-convex} if $\Sigma$ is umbilic, and in the particular case when $\frac{w''}{w}\neq\frac{w'''}{w'}$, equality in \ref{ine-convex} implies that $\Sigma$ is a sphere centered at $o_w$.
\end{theorem}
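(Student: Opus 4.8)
The plan is to adapt, essentially line by line, Brendle's proof of the Heintze--Karcher inequality for warped product manifolds (\cite{Brendle}; see also \cite{Hein-Kar}), with our warping function $w$ in the role of his potential. First I would recast \eqref{ine-convex} in terms of a single auxiliary function. Setting $F(t)=\int_0^t w(s)\,ds$ and $f:=F\circ r$, Proposition~\ref{hess-mod} gives $\Hess^{M_w^n}f=w'(r)\,g_{M_w^n}$, since the choice $F'=w$ makes the radial coefficient $F''-F'\eta_w$ vanish; writing $\Phi:=w'\circ r>0$ we then have $\nabla f=w(r)\,\nabla^{M_w^n}r$ and $\Delta^{M_w^n}f=n\Phi$. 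Since $d\mu_w=\Phi\,dV$ and $d\sigma_w=\Phi\,dA$, inequality \eqref{ine-convex} is equivalent to
\begin{equation*}
(n-1)\int_\Sigma\frac{\Phi}{h}\,dA\ \ge\ n\int_\Omega\Phi\,dV ,
\end{equation*}
so the quantity to be controlled from above is the $w$-weighted volume $\int_\Omega\Phi\,dV$.

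Next I would introduce the inward normal flow. Let $\nu$ be the outward unit normal of $\Sigma$; for $x\in\Sigma$ let $\gamma_x$ be the unit-speed geodesic with $\gamma_x(0)=x$ and $\gamma_x'(0)=-\nu(x)$; put $\Psi(x,t):=\gamma_x(t)$ and let $c(x)\in(0,\infty)$ be the cut time of $\Sigma$ along $\gamma_x$ (finite because $\overline\Omega$ is compact). A standard covering argument---each $p\in\Omega$ lies on a minimizing segment issuing orthogonally from the nearest point of $\Sigma$---shows $\Omega\subseteq\Psi(\{(x,t):x\in\Sigma,\ 0\le t\le c(x)\})$, so by the area formula $\int_\Omega\Phi\,dV\le\int_\Sigma\int_0^{c(x)}\Phi(\gamma_x(t))\,\operatorname{Jac}\Psi(x,t)\,dt\,dA(x)$. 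The proof then reduces to the one-dimensional estimate
\begin{equation*}
\int_0^{c(x)}\Phi(\gamma_x(t))\,\operatorname{Jac}\Psi(x,t)\,dt\ \le\ \frac{n-1}{n}\,\frac{\Phi(x)}{h(x)}\qquad(x\in\Sigma),
\end{equation*}
which, integrated over $\Sigma$ and multiplied by $n$, gives \eqref{ine-convex}. For a geodesic sphere $\Sigma=S^w_R$ centred at $o_w$ this one-dimensional estimate is an equality---the seed of the rigidity statement.

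The hard part is exactly this one-dimensional estimate, and it is where every hypothesis on $w$ is used. Along $\gamma_x$ the Hessian $U(t):=\Hess^{M_w^n}\rho$ of $\rho:=\dist_{M_w^n}(\cdot,\Sigma)$ solves the matrix Riccati equation $U'+U^2+\mathcal R(t)=0$, with $\mathcal R(t)X=R^{M_w^n}(X,\gamma_x'(t))\gamma_x'(t)$ and $\tr U(0^+)=-h(x)$, and one has $\tfrac{d}{dt}\log\operatorname{Jac}\Psi(x,t)=\tr U(t)$. The radial and fibre sectional curvatures of $M_w^n$ are the explicit functions of $r$ in Proposition~\ref{propModelRadialCurv}, and---exactly as in the proof of Theorem~\ref{brendle-theorem}---the hypotheses
\begin{equation*}
\frac{1}{w^2}-\Bigl(\frac{w'}{w}\Bigr)^2\ \ge\ -\frac{w''}{w}\ \ge\ -\frac{w'''}{w'}
\end{equation*}
translate into $K_{\Pi_{\text{fiber}}}\ge K_{\Pi_{\text{rad}}}$ together with $\tfrac{d}{dr}K_{\Pi_{\text{rad}}}\le 0$. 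Feeding these into the Riccati equation, combining with the Hessian of $\Phi=w'\circ r$ (again explicit from Proposition~\ref{hess-mod}) and the AM--GM inequality applied to the eigenvalues of $U(t)$---legitimate for $t$ up to the cut time, where the relevant factors remain nonnegative---one obtains a differential inequality for $t\mapsto\Phi(\gamma_x(t))\bigl(\operatorname{Jac}\Psi(x,t)\bigr)^{1/(n-1)}$ whose integration yields the displayed bound. Carrying this computation out is the technical core, and the stated hypotheses on $w$ are precisely what makes Brendle's estimate go through verbatim.

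For the equality statement I would trace equality back through the chain: it forces almost every $p\in\Omega$ to be covered exactly once, with $c(x)$ attained at the first focal point; equality in AM--GM forces each equidistant hypersurface---in particular $\Sigma$ itself---to be umbilic; and equality in the Riccati comparison, which becomes strict wherever $w''/w\neq w'''/w'$ (i.e.\ wherever $\tfrac{d}{dr}K_{\Pi_{\text{rad}}}<0$), forces $\Sigma$ to be a level set of $r$, hence a geodesic sphere centred at $o_w$. These rigidity details are again those of \cite{Brendle}.
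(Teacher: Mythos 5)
Your overall plan is sensible, and the preliminary reduction and the shape of the rigidity argument both match the paper's. But there is one genuine divergence from the paper's argument (which is Brendle's, Theorem~3.5 of \cite{Brendle}) that needs to be flagged, because it is the mechanism that makes the whole thing go.

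You set up the inward normal flow using the unit-speed geodesics of $g_{M_w^n}$ itself: $\gamma_x(0)=x$, $\gamma_x'(0)=-\nu(x)$. The paper instead works with the \emph{conformally modified} metric $g_c=\frac{1}{w'}g_{M_w^n}$ and its normal exponential map $\Phi$, for which $\left.\frac{\partial}{\partial t}\Phi(x,t)\right|_{t=0}=-w'(r(x))\,\nu(x)$; the distance $u$ and the level hypersurfaces $\Sigma^*_t$ are taken with respect to $g_c$, while the mean curvature $h$ and second fundamental form used in the evolution inequality are still computed in $g_{M_w^n}$. This is not a cosmetic reparametrization: conformal changes of metric do not preserve geodesics, so the curves you flow along are genuinely different from Brendle's. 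The conformal modification is the key trick; it is precisely what turns the weighted quantity $\frac{w'}{h}$ into a Lyapunov-type function along the flow (the crucial estimate $\frac{\partial}{\partial t}\bigl(\frac{w'}{h}\bigr)\le -\frac{1}{n-1}(w')^2$ in the paper), and hence produces exactly the $n\mu_w(\Omega)$ term after integrating in $t$. Your claim that, using the original normal flow and coupling the Riccati equation with the Hessian of $w'$, ``Brendle's estimate goes through verbatim'' is not justified; the one-dimensional inequality you display would have to be re-derived from scratch in the original metric, and it is not at all clear that the hypotheses $\frac{1}{w^2}-\bigl(\frac{w'}{w}\bigr)^2\ge -\frac{w''}{w}\ge -\frac{w'''}{w'}$ feed in the same way. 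If you want to keep the classical (unconformal) normal flow, you need to actually produce that estimate; otherwise you should adopt the conformal flow as in the paper.

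A smaller point on rigidity: concluding that $\Sigma$ ``is a level set of $r$, hence a geodesic sphere centred at $o_w$'' silently assumes $\Sigma$ is connected. Equality forces $\langle\nabla^{M_w^n}r,\nu\rangle^2\equiv 1$, which only says each connected component of $\Sigma$ is a geodesic sphere centred at $o_w$; $\Omega$ could a priori be an annulus or a union of annuli and a ball. The paper rules these out with an explicit additional argument (the sign of $h=-\langle\nu,H\rangle$ cannot be consistent on the two boundary spheres of an annulus since $H$ always points toward $o_w$), and you need some such argument too.
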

From corollary \ref{cmc-model} it is clear that if the hypersurface is a constant mean hypersurface bounding a domain, the surface is mean convex and attains equality in inequality (\ref{ine-convex}). Hence, the only thing to do in order to prove theorem \ref{Alexandrov} is to prove the above theorem.

\begin{remark}
The proof of the above theorem follows from the proof of Theorem 3.5 of \cite{Brendle}. The only new piece is the condition $\frac{w''}{w}\neq\frac{w'''}{w'}$ and its rigidity consequences arising from remark \ref{equality-model}. For completeness in order to attain remark \ref{equality-model} we have to repeat part of the proof done in section 3 of \cite{Brendle}.  
\end{remark}

\begin{proof}
Since $w'>0$ we can use the following conformally modified metric $ g_c=\frac{1}{w'}g_{M_w^n}$ being $g_{M_w^n}$ the metric tensor in $M_w^n$. For each point $p\in \overline {\Omega}$, we denote by $u(p)=d_{g_c}(p,\Sigma)$ the distance to $p$ from $\Sigma$ with respect to the metric $g_c$, and we denote by $\Phi: \Sigma\times [0,\infty)\to \overline{\Omega}$ the normal exponential map with respect to $g_c$. Namely, for each point $x\in \Sigma$, the curve $t\to\Phi(x,t)$ is a geodesic with respect to $g_c$, and we have 
\begin{equation}\label{evolution}
\Phi(x,0)=x,\quad \left.\frac{\partial}{\partial t}\Phi(x,t)\right\vert_{t=0}=-w'(r(x))\nu(x).
\end{equation}

Let us define

\begin{equation}
\begin{aligned}
A&:=\left\{(x,t\in\Sigma\times[0,\infty)\vert\, u(\Phi(x,t))=t\right\}\\
A^*&:=\left\{(x,t\in\Sigma\times[0,\infty)\vert\, (x,t+\delta)\in A \text{  for some }\delta>0\right\}\\
\Sigma_t^*&=\Phi(A^*\cap(\Sigma\times \{t\}).
\end{aligned}
\end{equation}
We also denote by $h$ and $B^{\Sigma_t^*}$ the mean curvature and the second fundamental form of $\Sigma^*_t$ with respect to the metric $g_{M_w^n}$. Hence,
\begin{proposition}[See proposition 3.2 of \cite{Brendle}]
The mean curvature of $\Sigma_t^*$ is positive and satisfies the differential inequality
$$
\frac{\partial}{\partial t}\left(\frac{w'}{h}\right)\leq -\frac{1}{n-1}\left(w'\right)^2.
$$
\end{proposition}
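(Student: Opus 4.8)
The plan is to reproduce the argument behind Proposition~3.2 of \cite{Brendle}, paying attention to what changes when $\tfrac{w''}{w}\neq\tfrac{w'''}{w'}$. Write $g:=g_{M_w^n}$ and recall that $g_c=\tfrac{1}{w'}g=e^{2\psi}g$ with $\psi:=-\tfrac12\log w'$. Since each curve $t\mapsto\Phi(x,t)$ is a $g_c$-geodesic meeting $\Sigma$ orthogonally, on the open set $A^{*}$ (that is, strictly before the first $g_c$-focal point) the slices $\Sigma_t^{*}$ sweep out a smooth normal variation in the metric $g_c$, so the $g_c$-shape operator $S_t^{c}$ of $\Sigma_t^{*}$ obeys the Riccati equation $\partial_t S_t^{c}=-(S_t^{c})^{2}-R^{c}(\,\cdot\,,e_0)e_0$, where $e_0:=\partial_t\Phi$ is the $g_c$-unit normal and $R^{c}$ is the curvature tensor of $g_c$. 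Taking the trace, the $g_c$-mean curvature $H_t^{c}=\operatorname{tr} S_t^{c}$ satisfies
\[
\partial_t H_t^{c}=-\lvert S_t^{c}\rvert_{g_c}^{2}-\operatorname{Ric}_{g_c}(e_0,e_0)\ \le\ -\tfrac{1}{n-1}\,(H_t^{c})^{2}-\operatorname{Ric}_{g_c}(e_0,e_0),
\]
the inequality by Cauchy--Schwarz, with equality exactly when $\Sigma_t^{*}$ is totally umbilic, a conformally invariant condition and hence the same whether measured in $g_c$ or in $g$.

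Next I would translate everything back to the metric $g$ and to the radial variable $r$. The conformal change $g_c=e^{2\psi}g$ alters the second fundamental form of a hypersurface by an explicit rule, so the $g$-mean curvature $h$ of $\Sigma_t^{*}$ and the quantity $H_t^{c}$ differ by a factor built from $w'$ and the $g$-normal derivative of $\psi$; likewise the $g_c$-arclength parameter $t$ is tied to $r$ through $e_0$ and the radial component $\langle\nabla^{M}r,\cdot\rangle$. Performing these substitutions converts the displayed Riccati inequality into a scalar first-order differential inequality for $w'/h$ along the flow lines $t\mapsto\Phi(x,t)$.

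The heart of the matter is the model-space evaluation of $\operatorname{Ric}_{g_c}(e_0,e_0)$. Using the explicit $o_w$-radial and fibre sectional curvatures of $M_w^{n}$ from Proposition~\ref{propModelRadialCurv} together with the standard conformal transformation law for the Ricci tensor with conformal factor $e^{2\psi}=1/w'$, one writes $\operatorname{Ric}_{g_c}(e_0,e_0)$ purely in terms of $w,w',w'',w'''$ and of the radial component of $e_0$. The hypotheses
\[
\frac{1}{w^{2}}-\left(\frac{w'}{w}\right)^{2}\ \ge\ -\frac{w''}{w}\ \ge\ -\frac{w'''}{w'}
\]
are precisely what makes the curvature terms have the sign needed so that, after collecting everything, the inequality reduces exactly to $\partial_t\!\left(\tfrac{w'}{h}\right)\le-\tfrac{1}{n-1}(w')^{2}$. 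When in addition $\tfrac{w''}{w}\neq\tfrac{w'''}{w'}$ (equivalently $\tfrac{d}{dr}K_{\Pi_{\mathrm{rad}}}<0$, cf.\ \eqref{w-inequalitiesII}), the middle inequality is strict, which sharpens the estimate and, combined with the equality case of the first paragraph, supplies the rigidity invoked in Remark~\ref{equality-model}. Positivity of $h$ on each $\Sigma_t^{*}$ then comes out of the same argument: $\Sigma=\Sigma_0^{*}$ is mean-convex by hypothesis (and indeed by Corollary~\ref{cmc-model} in the application to constant mean curvature hypersurfaces), and since $w'/h$ is strictly decreasing it cannot reach $0$, so $h$ stays positive as long as $\Phi(x,\cdot)$ remains in $A^{*}$.

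The part I expect to be most delicate is not conceptual but bookkeeping: keeping straight which metric each of $h$, $H^{c}$, $\nu$, $e_0$, $t$, $r$ refers to across the conformal change, and verifying that the algebraic combination of the sectional-curvature expressions of $M_w^{n}$ with the conformal correction terms telescopes cleanly to the single constant $-\tfrac{1}{n-1}(w')^{2}$ on the right-hand side, with no residual terms. Here following \cite[\S3]{Brendle} step by step is the safe route.
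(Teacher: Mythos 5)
Your route is sound in concept but genuinely different from the paper's. You set up the Riccati equation for the $g_c$-shape operator along the $g_c$-geodesic flow, trace it, apply Cauchy--Schwarz, and then propose to translate back to $g=g_{M_w^n}$ via the conformal transformation rules for the second fundamental form and the Ricci tensor. The paper never leaves the original metric for the curvature estimate: it applies the first-variation formula $\partial_t h=\Delta_{\Sigma_t^*}w'+\bigl(\operatorname{Ric}(\nu,\nu)+\lVert B^{\Sigma_t^*}\rVert^2\bigr)w'$ for the $g$-mean curvature under the normal variation of $g$-speed $w'$, computes $\Delta_{\Sigma_t^*}w'$ via Propositions~\ref{Hes-comp-sub} and \ref{hess-mod} (which is precisely where the hypothesis $\tfrac{w'''}{w'}\ge\tfrac{w''}{w}$ enters, through the sign of the off-radial piece of the Hessian), bounds $\operatorname{Ric}(\nu,\nu)\ge-(n-1)\tfrac{w''}{w}$ directly from the model-space curvature data, and closes with $h^2\le(n-1)\lVert B\rVert^2$. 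Both packages encode the same Jacobi-field geometry; the trade-off is that the paper's division of labor bypasses the conformal transformations of $\operatorname{Ric}$ and of the second fundamental form entirely, while your route has to carry both, which involves second derivatives of $\psi=-\tfrac12\log w'$ and a nontrivial relation between $H_t^c$, $h$, $t$ and $r$.

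There is also a concrete gap: the claim that after the conformal substitutions everything ``reduces exactly'' to $\partial_t(w'/h)\le-\tfrac{1}{n-1}(w')^2$ with no residual terms is asserted, not demonstrated, and that telescoping is the whole content of the proposition. You never exhibit where $\tfrac{w'''}{w'}\ge\tfrac{w''}{w}$ actually does its work in your framework (in the paper it controls the non-radial part of $\Delta_{\Sigma_t^*}w'$; in yours it would have to emerge from the conformal Hessian of $\psi$), nor do you separate the three inequalities — the Hessian estimate, the Ricci bound, Cauchy--Schwarz — whose individual equality cases feed Remark~\ref{equality-model} and the later rigidity. You acknowledge this yourself by recommending to follow \cite[\S3]{Brendle} step by step, which is effectively switching back to the paper's route. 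As written the argument is therefore a plan rather than a proof; the conformal bookkeeping needs to be carried through before it establishes the stated differential inequality.
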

\begin{proof}
Since $\nu=-\frac{\nabla u}{\vert \nabla u\vert}$ is the outward-pointing unit normal vector to $\Sigma_t^*$ with respect to the metric $g_{M_w^n}$, by the variation formulas, the mean curvature of $\Sigma_t^*$ satisfies the following equation
\begin{equation}\label{unua-des}
\frac{\partial }{\partial t}h=\Delta_{\Sigma^*_t}w'+\left(\text{Ricc}^{M_w^n}(\nu,\nu)+\Vert B^{\Sigma^*_t}\Vert^2\right)w'.
\end{equation}
Using proposition \ref{Hes-comp-sub} for any orthonormal basis $\{e_i\}_{i=1}^{n-1}$ of $\Sigma_{t}^*$
\begin{equation}
\begin{aligned}
\Delta_{\Sigma_t^*}w'&=\sum_{i=1}^{n-1}\Hess^{\Sigma_t^*}w'(e_i,e_i)\\
&=\sum_{i=1}^{n-1}\Hess^{M_w^n}w'(e_i,e_i)+\sum_{i=1}^{n-1}\langle B^{\Sigma_t^*}(e_i,e_i),\nabla^{M_w^n}w'\rangle\\
&=\Delta^{M_w^n}w'-\Hess^{M_w^n}w'(\nu,\nu)+\langle H,\nabla^{M_w^n}w'\rangle,
\end{aligned}
\end{equation}
applying proposition \ref{hess-mod} 
\begin{equation}
\begin{aligned}
\Delta_{\Sigma_t^*}w'=&w'''+(n-1)w''\eta_w-\Hess^{M_w^n}w'(\nu,\nu)+\langle H,\nabla^{M_w^n}w'\rangle\\
=& w'\left[\left(\frac{w'''}{w'}-\frac{w''}{w}\right)\left(1-\langle \nabla^{M_w^n} r, \nu\rangle^2\right)+\frac{w''}{w}\left(n-1\right)\right]\\
&+\langle H,\nabla^{M_w^n}w'\rangle.
\end{aligned}
\end{equation}
Taking into account that $1-\langle \nabla^{M_w^n} r, \nu\rangle^2\geq 0$ and $\frac{w'''}{w'}\geq \frac{w''}{w}$
\begin{equation}\label{lapla-sigma}
\begin{aligned}
\Delta_{\Sigma_t^*}w'\geq&w'\frac{w''}{w}(n-1)+\langle H,\nabla^{M_w^n}w'\rangle\\
=&w'\frac{w''}{w}(n-1)-h\langle \nu,\nabla^{M_w^n}w'\rangle.
\end{aligned}
\end{equation}
\begin{remark}\label{equality-model}
If we have equality in inequality (\ref{lapla-sigma}) then 
\begin{equation}
\left(\frac{w'''}{w'}-\frac{w''}{w}\right)\left(1-\langle \nabla^{M_w^n} r, \nu\rangle^2\right)=0.
\end{equation}
In the particular case when $\frac{w'''}{w'}\neq \frac{w''}{w}$ we would get
\begin{equation}\label{squaone}
\langle \nabla^{M_w^n} r, \nu\rangle^2=1
\end{equation}
\end{remark}

In order to make use of equality (\ref{unua-des}) we need estimate $\text{Ricc}^{M_w^n}(\nu,\nu)$ and $\Vert B^{\Sigma_t^+}\Vert^2$. Those estimates are taken care of in the next two lemmas

\begin{lemma}[From Proposition \ref{propModelRadialCurv}]
Let $M_w^n$ be a $w-$model space. Suppose that
\begin{equation}
\frac{1}{w^2}-\left(\frac{w'}{w}\right)^2\geq -\frac{w''}{w},
\end{equation}
then for any unit vector $\nu$,
\begin{equation}\label{ine-ricci}
\text{Ricc}^{M_w^n}(\nu,\nu)\geq-(n-1)\frac{w''}{w}.
\end{equation}
\end{lemma}
\begin{lemma}

For any hypersurface $S$, 
\begin{equation}\label{ine-mean}
h^2\leq (n-1) \Vert B^S \Vert^2
\end{equation}
with equality if and only if $S$ is umbilic.
\end{lemma}

Applying inequalities (\ref{lapla-sigma}),(\ref{ine-ricci}) and (\ref{ine-mean}) to equality (\ref{unua-des}) we get
\begin{equation}
\frac{\partial h}{\partial t}\geq -h\langle \nu,\nabla^{M_w^n}w'\rangle+w'\frac{h^2}{n-1}
\end{equation}
Therefore
\begin{equation}
\begin{aligned}
\frac{\partial}{\partial t}\left(\frac{h}{w'}\right)=&\frac{1}{w'}\frac{\partial h}{\partial t}-\frac{h}{\left(w'\right)^2}\frac{\partial w'}{\partial t}\\\geq &\frac{-h}{w'}\langle \nabla^{M_w^n}w',\nu\rangle+\frac{h^2}{n-1}-\frac{h}{\left(w'\right)^2}\frac{\partial w'}{\partial t}.
\end{aligned}
\end{equation}
By the evolution equation (\ref{evolution})
\begin{equation}
\frac{\partial w'}{\partial t}=-w'\nu(w')=-w'\langle \nabla^{M_w^n}w',\nu\rangle.
\end{equation} 
Hence,
\begin{equation}
\begin{aligned}
\frac{\partial}{\partial t}\left(\frac{h}{w'}\right)\geq &\frac{h^2}{n-1}.
\end{aligned}
\end{equation}
And the proposition follows.
\end{proof}
Following the proof of \cite[Theorem 3.5]{Brendle} consider the quantity
\begin{equation}
Q(t)=(n-1)\int_{\Sigma^*_t}\frac{w'}{h}dA.
\end{equation}
Therefore we can use a similar proposition to proposition 3.4 in \cite{Brendle}
\begin{proposition}
\begin{equation}
Q(0)-Q(\tau)\geq n\int_{u\leq \tau}w'dV.
\end{equation}
\end{proposition}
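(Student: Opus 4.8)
The plan is to prove the proposition by the fundamental theorem of calculus applied to $t\mapsto Q(t)$: the monotonicity of $w'/h$ along the flow, just established in the preceding proposition, together with the first variation of area along $\Phi$, forces $Q$ to decrease at a controlled rate, and the coarea formula then converts the accumulated decrease into the bulk integral $n\int_{u\le\tau}w'\,dV$. To set this up I would first rewrite $Q(t)$ over a \emph{fixed} reference domain: put $A^*_t:=\{x\in\Sigma:(x,t)\in A^*\}$ and let $J(x,t)$ denote the area Jacobian (with respect to $g_{M_w^n}$) of $\Phi(\cdot,t)\colon A^*_t\to\Sigma^*_t$, so that
\[
Q(t)=(n-1)\int_{A^*_t}\frac{w'(r(\Phi(x,t)))}{h(x,t)}\,J(x,t)\,d\mu_\Sigma(x),
\]
$d\mu_\Sigma$ being the induced area density on $\Sigma$. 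Granting that $t\mapsto Q(t)$ is absolutely continuous (see below), it then suffices to prove the pointwise estimate $Q'(t)\le-n\int_{\Sigma^*_t}(w')^2\,dA$ for a.e.\ $t\in[0,\tau]$ and to integrate it.

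The derivative of $Q$ receives three contributions. First, the integrand $w'/h$ evolves along the flow by the preceding proposition, $\frac{\partial}{\partial t}\!\big(\frac{w'}{h}\big)\le-\frac{(w')^2}{n-1}$. Second, the Jacobian obeys the first variation of area for a flow whose $g_{M_w^n}$-normal speed is $w'$ (recall $\frac{\partial}{\partial t}\Phi=-w'\nu$) and whose mean curvature is $h$, namely $\frac{\partial}{\partial t}J=-h\,w'\,J$. Combining these two,
\[
\frac{\partial}{\partial t}\!\Big(\frac{w'}{h}\,J\Big)\le-\frac{(w')^2}{n-1}\,J-(w')^2\,J=-\frac{n}{n-1}\,(w')^2\,J.
\]
Third, the reference domain is monotone, $A^*_{t'}\subset A^*_t$ for $t'>t$, because a $g_c$-geodesic issuing normally from $\Sigma$ that has ceased to realize $u$ does not start realizing it again; so differentiating the domain of integration produces a boundary term of sign $\le0$ (since $\frac{w'}{h}J>0$), which may simply be discarded. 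The three together give $Q'(t)\le-(n-1)\cdot\frac{n}{n-1}\int_{A^*_t}(w')^2\,J\,d\mu_\Sigma=-n\int_{\Sigma^*_t}(w')^2\,dA$.

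Integrating over $[0,\tau]$ yields $Q(0)-Q(\tau)\ge n\int_0^\tau\!\!\int_{\Sigma^*_t}(w')^2\,dA\,dt$, and it remains to recognize the right-hand side as $n\int_{u\le\tau}w'\,dV$. For this I would use that $\Phi$ carries $\{(x,t)\in A:\ t\le\tau\}$ onto $\{u\le\tau\}$ (the $g_c$-distance $u$ to $\Sigma$ is realized by a $g_c$-geodesic issuing normally from $\Sigma$), that $A\setminus A^*$ is negligible, and that along a flow of $g_{M_w^n}$-normal speed $w'$ one has $\Phi^*(dV)=w'\,J\,d\mu_\Sigma\,dt$; consequently $\int_{u\le\tau}w'\,dV=\int_0^\tau\!\!\int_{A^*_t}(w')^2\,J\,d\mu_\Sigma\,dt=\int_0^\tau\!\!\int_{\Sigma^*_t}(w')^2\,dA\,dt$, which is precisely what is needed.

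The main obstacle, I expect, lies not in this computation but in its measure-theoretic underpinnings, exactly as in \cite[\S3]{Brendle}: that $A^*$ is open and $A\setminus A^*$ has measure zero, that $t\mapsto Q(t)$ is indeed absolutely continuous with the asserted a.e.\ derivative, and that the contribution of the shrinking domain genuinely carries the favorable sign (where mean convexity $h>0$ and $w'>0$ enter), together with the bookkeeping of the conformal factor $w'=g_{M_w^n}/g_c$ in the coarea step. These ingredients I would import from \cite{Brendle}; the only new input, supplied above, is the monotonicity of $w'/h$, whose equality case (traced to remark \ref{equality-model}) is what later upgrades this estimate to the rigidity statement in Theorem \ref{Hein-Kar}.
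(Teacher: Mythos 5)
Your argument is correct and reconstructs precisely the proof the paper defers to, namely Brendle's Proposition 3.4: combine the monotonicity of $w'/h$ from the preceding proposition with the first-variation identity $\partial_t J = -h\,w'\,J$, exploit that $t\mapsto A^*_t$ is nonincreasing so the boundary term has the right sign, and then convert $n\int_0^\tau\!\int_{\Sigma^*_t}(w')^2\,dA\,dt$ into $n\int_{u\le\tau}w'\,dV$ via the coarea identity $\Phi^*(dV)=w'\,J\,d\mu_\Sigma\,dt$. (One small caveat that does not affect your proof: for $\partial_t\Phi=-w'\nu$ to persist for all $t$ and hence for this coarea identity to hold, the conformal metric must be $g_c=\tfrac{1}{(w')^2}\,g_{M_w^n}$, as in Brendle, rather than the $\tfrac{1}{w'}\,g_{M_w^n}$ written in the paper, which appears to be a typo since the paper's own computation $\tfrac{\partial w'}{\partial t}=-w'\nu(w')$ also uses the intended scaling.)
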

 Letting $\tau\to\infty$ in the above proposition,
\begin{equation}
\begin{aligned}
(n-1)\int_{\Sigma}\frac{1}{h}d\sigma_w&=(n-1)\int_{\Sigma^*_t}\frac{w'}{h}dA\\
&=Q(0)\geq n\int_{u<\infty}w'dV= n\int_{\Omega}w'dV\\
&=n\mu_w(\Omega).
\end{aligned}
\end{equation}
And the inequality (\ref{ine-convex}) follows. 

\begin{figure}
\begin{center}
\begin{picture}(0,0)%
\includegraphics{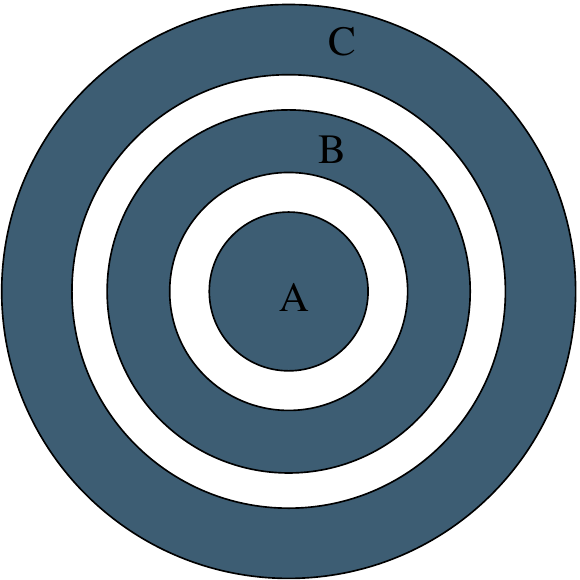}%
\end{picture}%
\setlength{\unitlength}{4144sp}%
\begingroup\makeatletter\ifx\SetFigFont\undefined%
\gdef\SetFigFont#1#2#3#4#5{%
  \reset@font\fontsize{#1}{#2pt}%
  \fontfamily{#3}\fontseries{#4}\fontshape{#5}%
  \selectfont}%
\fi\endgroup%
\begin{picture}(2640,2638)(9931,-1830)
\end{picture}%

\caption{By equality (\ref{squaone}) $\Sigma$ is a sphere or a finite union of spheres, hence $\Omega$ is either a geodesic ball centered at $o_w$ ( A ), or  a geodesic annulus centered at $o_w$ ( B ) or a finite union of annuli (B+C ) or   A geodesic ball centered at $o_w$ with  a finite union of geodesic annuli centered at $o_w$ (A + B + C ).   }
\label{figure:anells}
\end{center}
\end{figure}
\begin{figure}
\begin{center}
\begin{picture}(0,0)%
\includegraphics{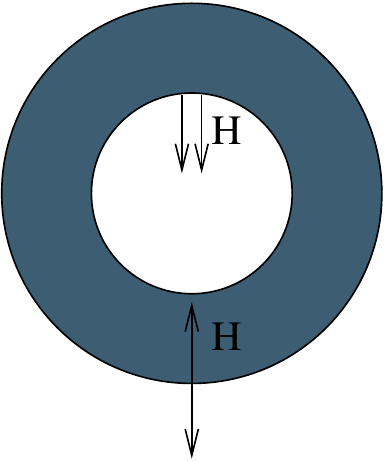}%
\end{picture}%
\setlength{\unitlength}{4144sp}%
\begingroup\makeatletter\ifx\SetFigFont\undefined%
\gdef\SetFigFont#1#2#3#4#5{%
  \reset@font\fontsize{#1}{#2pt}%
  \fontfamily{#3}\fontseries{#4}\fontshape{#5}%
  \selectfont}%
\fi\endgroup%
\begin{picture}(1754,2104)(10374,-1288)
\put(11296,-1141){\makebox(0,0)[lb]{\smash{{\SetFigFont{12}{14.4}{\familydefault}{\mddefault}{\updefault}{\color[rgb]{0,0,0}$\nu$}%
}}}}
\put(10936,164){\makebox(0,0)[lb]{\smash{{\SetFigFont{12}{14.4}{\familydefault}{\mddefault}{\updefault}{\color[rgb]{0,0,0}$\nu$}%
}}}}
\end{picture}%

\caption{The product $h=-\langle \nu, H\rangle$ is not constant in the boundary of an annulus centered at the center of the model.}
\label{figure:anellsno}
\end{center}
\end{figure}

Observe that equality in (\ref{ine-convex}) implies equality in (\ref{ine-mean}) and hence umbilic submanifold.  If moreover we assume $\frac{w'''}{w'}\neq \frac{w''}{w} $  equality in (\ref{ine-convex}) implies equality (\ref{squaone}). If $\Sigma$ has $k>0$ connected components $\Sigma_1,\cdots,\Sigma_k$, each one is a sphere centered at $o_w$. Hence, $\Omega$ is of one of the following types (see figure \ref{figure:anells}):
\begin{enumerate}
\item A geodesic ball centered at $o_w$.
\item A finite union of geodesic annuli centered at $o_w$.
\item A geodesic ball centered at $o_w$ with  a finite union of geodesic annuli centered at $o_w$.
\end{enumerate}

But we can prove that $\Omega$ does not contain annuli. Because, if $\Omega$ contains an annulus since $\eta_w>0$, $H$ is always pointing to $o_w$ (see proposition \ref{propWarpMean}) and the unit normal $\nu$ is always pointing outward to $\Omega$, therefore the product $h=-\langle \nu, H\rangle$ has to be not constant (see figure \ref{figure:anellsno}) in contradiction to the assumption of constant $h$.

Thus finally, $\Omega$ is a ball and $\Sigma$ is a sphere centered at $o_w$.

\end{proof}

\section{Proof of corollary \ref{tone-isoperimetric}}\label{sec:7}
By the Cheeger inequality (see \cite[theorem 3, chapter IV]{Chavel})
\begin{equation}
\lambda_1(\Omega)\geq \frac{1}{4}\left(\inf_{\mathcal{O}\subset\Omega}\frac{\text{A}(\partial \mathcal{O})}{\text{V}(\mathcal{O})}\right)^2,
\end{equation}
where $\mathcal{O}$ ranges on the open subdomains of $\Omega$. By the isoperimetric inequality (\ref{isoperime-ine-model})
\begin{equation}
\begin{aligned}
\lambda_1(\Omega)&\geq \frac{1}{4}\left(\inf_{\mathcal{O}\subset \Omega}\frac{\omega_{n-1}w(\text{Rad}(\text{V}(\mathcal{O}))^{n-1}}{\text{V}(\mathcal{O})}\right)^2\\
&=\frac{1}{4}\left(\inf_{\mathcal{O}\subset \Omega}\mathcal{I}(\text{V}(\mathcal{O}))\right)^2.
\end{aligned}
\end{equation}
Taking into account that $\mathcal{I}$ is non-increasing and $\text{V}(\mathcal{O})\leq \text{V}(\Omega)$, the corollary follows.
\section{Proof of corollary \ref{unuacol}}\label{Proof-Col1}
For any $p>0$, we use the density $d\mu_w$ to define the $L^p_w(\Omega)$-space, by declaring a measurable function $f$ to be an element of $L^p_w(\Omega)$ if the integral
\begin{equation}
\left(\int_{\Omega}\vert f \vert^pd\mu_w\right)^{1/p}
 \end{equation} 
 is finite. The $L^p_{w}(\Omega)$-norm of $f$, $\Vert f \Vert_{L^p_{w}(\Omega)}$, is given by the above expression. H\"older's inequality states that for $p,q>1$ satisfying
 \begin{equation}
 1/p+1/q=1\quad,
 \end{equation}
 one has for $\phi \in \text{L}^p_w(\Omega)$, $\psi \in \text{L}^q_w(\Omega)$ ,
 \begin{equation}\label{Holder}
 \int_{\Omega}\vert \phi\psi\vert d\mu_w\leq \Vert \phi \Vert_{L^p_{w}(\Omega)} \Vert \psi \Vert_{L^q_{w}(\Omega)}\quad .
 \end{equation}
 
 Now, from the Jellett-Minkowski's generalized formula we get
 \begin{equation}
 \sup_{\Omega}\vert \tau_w \vert \sigma_w(\partial \Omega) \geq m \mu_w(\Omega)-\sup_{\Omega}\vert \tau_w \vert \int_\Omega\vert H \vert d\mu_w.
 \end{equation}
 
 Applying the H\"older inequality to the last integral in the above inequality
\begin{equation}\label{after-holder}
 \sup_{\Omega}\vert \tau_w \vert \sigma_w(\partial \Omega) \geq m \mu_w(\Omega)-\sup_{\Omega}\vert \tau_w \vert \left(\int_\Omega\vert H \vert^k d\mu_w\right)^{1/k}\mu_w(\Omega)^{\frac{k-1}{k}}\quad,
 \end{equation}
 
 and the corollary follows.

\section{Proof of corollary \ref{bola}}\label{Proof-Col2}
Applying corollary \ref{unuacol} to the setting of corollary \ref{bola} (namely $w(t)=t$, $d\mu=dV$) we obtain
\begin{equation}\label{iso-quo}
\mathcal{I}_k(\Omega) \geq \frac{m \text{V}(\Omega)^{1/k}}{R}-\left(\int_P\vert H \vert^k dV\right)^{1/k}
\end{equation} 
 Taking into account  the definition of $\mathcal{I}_k$ and the isoperimetric profile, the corollary follows.
 \section{Proof of corollary \ref{bola2}}\label{Proof-Col3}
 Since $P$ has non-shrinking property, there exist $\rho$ such that for any $o\in P$ 
 \begin{equation}
V( B_{\rho}(o))> \left(\frac{R}{m}\right)^k\int_P\vert H\vert^kdV+\epsilon\quad, 
 \end{equation}
 for some $\epsilon>0$.
 
 Therefore, by inequality (\ref{iso-quo}) 
 \begin{equation}
 \mathcal{I}_{k,\rho}(P)>0\quad.
 \end{equation}
   Applying now \cite[Theorem 5 and inequality (15)]{Chavel-mod-constants}  the corollary is proven.
\section{Proof of corollary \ref{cor-minimal}}\label{Proof-Col4}
The first thing to do in order to prove the corollary \ref{cor-minimal} is to study the behavior of the volume of the extrinsic balls. Recall that an extrinsic ball $D_R(o)$ centered to the pole $o\in N$ and with radius $R$ is the sublevel set of the extrinsic distance function. Namely,
\begin{equation}
D_R(o)=\varphi^{-1}\left(B_R^N\left(o\right)\right)\quad,
\end{equation} 
being $B_R^N$ the geodesic ball of $N$ of radius $R$ centered at the pole $o\in N$.
Note that we can construct the order-preserving bijection 
$$
F: \erre^+ \to \erre^+\quad t\to F(t)
$$
given by equation (\ref{order-function}). 

Since $\varphi:P\to N$ is a minimal immersion into a manifold with a pole $N$, applying equation (\ref{laplacian-order-function}) we have
\begin{equation}\label{anterior-2}
\Delta^P F\circ r\geq m w'\circ r\quad .
\end{equation}
Taking into account that $w'>0$, by the maximum principle there exist $R_T$ such that
\begin{equation}
P=D_{R_T}(o)\quad.
\end{equation}

 Now we need the following monotonicity formula

\begin{proposition}
Under the assumptions of corollary \ref{cor-minimal}, the function $f:\erre^+\to\erre^+$ given by
\begin{equation}
f(R):=\frac{\mu_w(D_R)}{w(R)^m}\quad,
\end{equation}
is a nondecreasing function of $R$, and
 \begin{equation}
f(R)\geq c V_m\quad.
\end{equation}
\end{proposition}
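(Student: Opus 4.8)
The plan is to transplant the classical monotonicity-formula argument for minimal submanifolds to the weighted density $d\mu_w=w'(r)\,dV$ and the model warping function $w$, and then to compute the limit of $f$ at the pole. The starting point is the differential inequality (\ref{anterior-2}): applying Corollary~\ref{corLapComp} to the radial primitive $F$ of (\ref{order-function}) — for which $F'=w>0$ and $F''-F'\eta_w=w'-w(w'/w)=0$ — and using minimality ($H=0$), one obtains $\Delta^P(F\circ r)\ge m\,w(r)\,\eta_w(r)=m\,w'(r)$ on $P$. Write $V(R):=\mu_w(D_R)=\int_{D_R}w'(r)\,dV$ and let $\Sigma_R=\{r=R\}\cap P$ be the extrinsic geodesic sphere. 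For a regular value $R<R_T$ the extrinsic ball $D_R$ is precompact with smooth boundary $\Sigma_R$ in the interior of $P$, so integrating the inequality over $D_R$ and applying the divergence theorem, with $\nabla^P(F\circ r)=w(r)\,\nabla^P r$ and $\nu=\nabla^P r/\|\nabla^P r\|$ on $\Sigma_R$, gives
\[
w(R)\int_{\Sigma_R}\|\nabla^P r\|\,dA\ \ge\ m\,V(R).
\]

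Next I would differentiate $V$. By the coarea formula $V(R)=\int_0^R w'(t)\big(\int_{\Sigma_t}\|\nabla^P r\|^{-1}\,dA\big)\,dt$, so $V$ is locally absolutely continuous and, for a.e.\ $R$,
\[
V'(R)=w'(R)\int_{\Sigma_R}\frac{dA}{\|\nabla^P r\|}\ \ge\ w'(R)\int_{\Sigma_R}\|\nabla^P r\|\,dA\ \ge\ \frac{m\,w'(R)}{w(R)}\,V(R)=m\,\eta_w(R)\,V(R),
\]
using $\|\nabla^P r\|\le\|\nabla^N r\|=1$ from (\ref{radiality}) in the first inequality and the previous display in the second. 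Therefore, for a.e.\ $R$,
\[
f'(R)=\frac{V'(R)\,w(R)-m\,w'(R)\,V(R)}{w(R)^{m+1}}\ \ge\ 0,
\]
and since $f=V/w^m$ is locally absolutely continuous this shows $f$ is nondecreasing on the range $(0,R_T]$ on which $D_R$ is a proper sublevel set — which is the only range used in the sequel, since $D_R=P$ for $R\ge R_T$.

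For the estimate $f\ge cV_m$ I would let $R\to0^+$. Fix $p_0\in\varphi^{-1}(o)$; since $\varphi$ is an isometric immersion, $r(x)=\dist_N(o,\varphi(x))\le\dist_P(p_0,x)$, so the intrinsic geodesic ball $B^P_R(p_0)$ lies in $D_R$, whence $\mu_w(D_R)\ge\mu_w\big(B^P_R(p_0)\big)\ge c\,\Vol\big(B^P_R(p_0)\big)$ by $w'\ge c$. Because $\Vol(B^P_R(p_0))=V_m R^m\big(1+o(1)\big)$ as $R\to0$ while $w(R)=R\big(1+o(1)\big)$ (a model space has $w(0)=0$, $w'(0)=1$), we obtain $\liminf_{R\to0}f(R)\ge cV_m$, and since $f$ is nondecreasing on $(0,R_T]$, $f(R)\ge cV_m$ there, which is what the corollary uses.

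The main obstacle is purely technical bookkeeping rather than a new idea. One must invoke Sard's theorem so that the divergence theorem is available for a.e.\ $R$; one must check that $F\circ r$ is regular enough at $p_0$ for the divergence theorem to apply to small $D_R$ (it is $C^2$ near $p_0$, since $F(r)=\tfrac12 r^2+O(r^4)$ and $r^2$ is smooth there); and one needs the standard small-geodesic-ball volume asymptotics $\Vol(B^P_R(p_0))=V_mR^m+O(R^{m+2})$. With these in hand, absolute continuity of $R\mapsto V(R)$ promotes the pointwise-a.e.\ inequality $f'\ge0$ to the asserted monotonicity.
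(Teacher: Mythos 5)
Your proof is correct and takes essentially the same approach as the paper. Integrating $\Delta^P(F\circ r)\ge m\,w'$ over $D_R$ reproduces the paper's application of the generalized Jellett--Minkowski formula to the minimal extrinsic ball, your coarea step with $\|\nabla^P r\|\le 1$ matches the paper's bound $\sigma_w(\partial D_R)\le\tfrac{d}{dR}\mu_w(D_R)$, and both arguments arrive at the same logarithmic-derivative inequality $\tfrac{d}{dR}\ln\mu_w(D_R)\ge\tfrac{d}{dR}\ln w(R)^m$ and the same limit $\liminf_{R\to 0}f(R)\ge cV_m$ via $w'\ge c$ and the small-ball asymptotics.
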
 
\begin{proof}
Using theorem \ref{Jellet-teo} taking into account that $P$ is minimal and $\partial D_R$ lies in a geodesic sphere of $N$ of radius $R$, we obtain
\begin{equation} \label{equ1}
m\mu_w(D_R)\leq \frac{1}{\eta_w(R)}\sigma_w(\partial D_R)\quad .
\end{equation}
By the coarea formula we get
\begin{equation}\label{equ2}
\sigma_w(\partial D_R)\leq \frac{d}{dR}\mu_w(D_R)\quad.
\end{equation}
Therefore, using inequalities (\ref{equ1}) and (\ref{equ2}) together
\begin{equation}
\frac{d}{dR}\ln\left(\mu_w\left(D_R\right)\right)\geq \frac{d}{dR}\ln\left(w\left(R\right)^m\right)\quad .
\end{equation}
Hence, we obtain the desired monotonicity formula. Observe also that 
\begin{equation}
\lim_{R\to 0}\frac{\mu_w(D_R)}{w(R)^m}\geq \lim_{R\to 0}c\frac{V(D_R)}{w(R)^m}\geq c V_m\quad.
\end{equation}
And the proposition follows.\end{proof}

On the other hand, from inequality (\ref{equ1})
\begin{equation}
\sigma_w(\partial D_R)\geq m\mu_w(D_R)\eta_w(R)\quad.
\end{equation}
But taking into account the definition of the extrinsic ball and $w$-weighted area, and using the above proposition
\begin{equation}
\begin{aligned}
A(\partial D_R)\geq&\frac{m}{w(R)}\mu_w(D_R)=m\left(\frac{\mu_w(D_R)}{w(R)^m}\right)^{\frac{1}{m}}\mu_w(D_R)^{1-1/m}\\
\geq&m\left(cV_m\right)^{\frac{1}{m}}\mu_w(D_R)^{1-1/m}\quad.
\end{aligned}
\end{equation}
Hence, finally the corollary follows changing $R$ by $R_T$ in the above inequality.

\section{Proof of corollary \ref{cor-volume}}\label{Proof-Col5}
Applying inequality (\ref{after-holder}) to the extrinsic ball $D_R$ taking into account that $\sup_{D_R}\vert \tau_w\vert\leq \frac{1}{\sqrt{-b}}$ we get
\begin{equation}
 m \mu_w(D_R) \leq  \frac{\sigma_w(\partial D_R)}{\sqrt{-b}}+\frac{1}{\sqrt{-b}} \left(\int_{D_R}\vert H \vert^p d\mu_w\right)^{1/p}\mu_w(D_R)^{\frac{p-1}{p}}\quad,
\end{equation}
 by inequality (\ref{equ2}),
\begin{equation}
m\sqrt{-b}-\Vert H \Vert_{L_w^p(P)}\mu_w(D_R)^\frac{-1}{p}\leq \frac{d}{dR}\ln\mu_w(D_R)\quad,
\end{equation}
Given $R_0>0$, for any $R\geq R_0$
\begin{equation}
m\sqrt{-b}-\Vert H \Vert_{L_w^p(P)}\mu_w(D_{R_0})^\frac{-1}{p}\leq \frac{d}{dR}\ln\mu_w(D_R)\quad,
\end{equation}

If the submanifold has finite volume, there exist a divergent sequence $\{R_i\}_{i=1}^\infty$ such that

\begin{equation}
m\sqrt{-b}-\Vert H \Vert_{L_w^p(P)}\mu_w(D_{R_0})^\frac{-1}{p}\leq \limsup \frac{d}{dR}\ln\mu_w(D_{R_i})=0.
\end{equation}
And therefore the corollary follows letting $R_0$ tend to infinity.
\section*{Acknowledgments}
The author is very grateful to Simon Brendle for his useful discussion about his paper \cite{Brendle} and to Antonio Cañete  for show me the paper \cite{Mau-Mor-2009}.
\def\cprime{$'$} \def\cprime{$'$} \def\cprime{$'$} \def\cprime{$'$}
  \def\cprime{$'$}
\providecommand{\bysame}{\leavevmode\hbox to3em{\hrulefill}\thinspace}
\providecommand{\MR}{\relax\ifhmode\unskip\space\fi MR }
\providecommand{\MRhref}[2]{%
  \href{http://www.ams.org/mathscinet-getitem?mr=#1}{#2}
}
\providecommand{\href}[2]{#2}

\end{document}